\documentclass[11pt]{article}
\usepackage[margin=1.1in]{geometry}              
\geometry{letterpaper}                  
\usepackage{graphicx}
\usepackage{amssymb}
\usepackage{amsmath}
\usepackage{epstopdf}
\usepackage{color}
\usepackage{xcolor}
\usepackage{txfonts}

\usepackage{multicol}
\usepackage{caption}
\usepackage{soul}

\usepackage[normalem]{ulem}
\usepackage{amsfonts}
\usepackage{algorithmic}

\usepackage{lipsum}

\DeclareGraphicsRule{.tif}{png}{.png}{`convert #1 `dirname #1`/`basename #1 .tif`.png}
\DeclareGraphicsExtensions{.eps,.pdf,.jpeg,.png}

\newcommand{\R}{\mathbb{R}}

\DeclareMathOperator{\tr}{tr}

\newcommand{\NL}{N\ell}
\newcommand{\SL}{S\ell}

\newcommand{\eps}{\varepsilon}
\newcommand{\bigO}{\mathcal{O}}
\newcommand{\CM}{\mathcal{M}}			
\newcommand{\CMl}{\CM_{a-}}			    
\newcommand{\CMr}{\CM_{a+}}				
\newcommand{\CMc}{\CM_{r}}				
\newcommand{\fold}{\mathcal{L}}			
\newcommand{\Fl}{\fold_{-}}				
\newcommand{\Fr}{\fold_{+}}					

\newcommand{\gL}{g_{\text{L}}}
\newcommand{\gK}{g_{\text{K}}}
\newcommand{\gCa}{g_{\text{Ca}}}
\newcommand{\EL}{E_{\text{L}}}
\newcommand{\EK}{E_{\text{K}}}
\newcommand{\ECa}{E_{\text{Ca}}}

\newcommand{\SCM}{\mathcal{Z}}          
\newtheorem{remark}{Remark}
\newtheorem{proposition}{Proposition}
\newtheorem{proof}{Proof}

\newcommand{\PM}[1]{{\color{black}{#1}}}

\newcommand{\Rev}[1]{{\color{black}{#1}}}

\title{Spike-Adding Mechanisms in a Three-Timescale System: Insights from the FitzHugh-Nagumo Model with Periodic Forcing}

\author{Pake Melland\thanks{Department of Mathematics, Oregon Institute of Technology, OR, USA,  
  Pake.Melland@oit.edu.}
\; and\; Rodica Curtu\thanks{Department of  Mathematics, University of Iowa, IA, USA,
  rodica-curtu@uiowa.edu.}
\; and\; Zahra Aminzare\thanks{Department of  Mathematics, University of Iowa, IA, USA,
  zahra-aminzare@uiowa.edu.}
  }
\date{}
\begin{document}

\maketitle

\begin{abstract}
 In this work, we investigate the spike-adding mechanism in a class of three-di\-men\-sio\-nal fast-slow systems with three distinct timescales, inspired by the FitzHugh-Nagumo (FHN) model driven by periodic input. First, we numerically generate a \Rev{spike-adding} diagram for the FHN model by varying the frequency and amplitude of the input, revealing that as the frequency decreases and the amplitude increases, the number of spikes within each burst grows. \Rev{We demonstrate that a similar spike-adding structure occurs in the more realistic, periodically forced Morris–Lecar neuronal model.} Next, we apply methods from geometric singular perturbation theory to compute critical and super-critical manifolds of the fast-slow system. We use them to characterize the emergence of new burst-spikes in the FHN model, when the periodic forcing resembles a low frequency-band brain rhythm. We then describe how the uncovered spike-adding mechanism defines the boundaries that separate regions with different spike counts in the parameter space.
\end{abstract}

\textbf{Keywords} 
Spike-adding diagram,
folded saddle canard, folded node canard.

\medskip

\textbf{MSCcodes}
34C23, 
34C25, 
34E13, 
34D15, 
92B05, 
37N25. 

\bigskip

\section{Introduction}\label{Sec:Introduction}
A common firing pattern in neurons and other excitable cells is bursting, which is characterized by an active phase (a sequence of spikes) followed by a silent phase. Early work by Rinzel and collaborators \cite{
Rinzel1985Bursting,Rinzel1987Formal, RinzelErmentrout1998} pioneered the classification of bursting oscillations by fast/slow geometric methods, focusing on the bifurcation of the fast subsystem where the slow variables are treated as fixed parameters. Initially, they identified three classes of bursts: square-wave, parabolic, and elliptic. Later,  in his  classification scheme  \cite{Izhikevich2006Dynamical}, Izhikevich renamed these bursting types based on the bifurcations of the corresponding equilibrium and limit cycles. For example, square-wave, parabolic and elliptic bursting were renamed fold/homoclinic, circle/circle, and subHopf/fold cycle bursting, respectively.

Two key questions regarding bursts, which were not  addressed comprehensively by previous classifications, concern the transition from spiking to bursting and the process of adding spikes within a burst. While the former question has received considerable attention \Rev{\cite{ErmentroutKopell1986,FoxRotsteinNadim2014, Innocenti2007Dynamical,Nowacki2012Dynamical, Sherman2001,  Terman1992, XJWang1993,HanBi2012,Ma2022}}, the latter remained less explored. In particular,  the spike-adding mechanism in square-wave bursting has been  studied both numerically and analytically through fold/homoclinic bifurcations \cite{Barrio2024Dynamics, Barrio2020Spike-adding, Barrio2021Classification},  codimension one- and two- homoclinic bifurcations \cite{Channell2007Origin, Linaro2012Codimension}, and via canard explosions \cite{Carter2020Spike-Adding, Nowacki2012Dynamical, Penalva2024Dynamics}. In contrast, there has been limited research on spike-adding mechanisms of parabolic and elliptic bursts. One example is Desroches et al.'s paper \cite{DESROCHES2016Spike-adding} that analyzed parabolic bursting  in a system with 2-slow/2-fast dynamics. Their four-dimensional system had folded saddle canards that were shown to define the mechanism for burst spike-generation. 

\Rev{In this paper, we study the spike-adding structure of a class of bursting generated by a \textit{three-dimensional} system with \textit{three distinct timescales}.
While this bursting does not necessarily fall into any of the three main 
classification categories,  it does share some features with the elliptic bursters, and others with the parabolic bursters (see Discussion).} 
  As a toy-model for our analysis we consider the FitzHugh-Nagumo (FHN) system with the fast-timescale equation (``the voltage equation") driven by a low-frequency periodic input. First, we investigate numerically the dynamics of the periodically forced FHN equations as the amplitude and frequency of the forcing term vary. We find a transition in the system's dynamics from single-spike  to bursting as the input frequency decreases and its amplitude increases. \Rev{Next, by converting the forced FHN model to a three-dimensional system with three timescales (fast/slow/super-slow), we  focus on the slowly oscillating  force-drive,  where the system exhibits bursting, and describe the underlying spike-adding mechanism. 
To emphasize the biological relevance of our mathematical results, we then  demonstrate that a similar spike-adding structure occurs in the more realistic Morris–Lecar (ML) neuronal model. We choose parameters for the unforced ML model that produce two subcritical Hopf bifurcations and nearby saddle-node of limit cycles \cite{ermentrout2010foundations}, similar to the unforced FHN model.}
  
Other studies \cite{desroches2018SIADS, LetsonEtAl2017} have discussed a similar problem in three-dimensional systems with three time scales. In \cite{LetsonEtAl2017}, the spike-adding mechanism is attributed to the presence of a delayed Hopf bifurcation and a folded node singularity, while in \cite{desroches2018SIADS} it is due to a Hopf bifurcation and a folded saddle singularity. The dynamics analyzed in our paper are 
closely related to the setups in \cite{desroches2018SIADS, LetsonEtAl2017}. 
However, they distinguish themselves  by requiring
a combination of  the aforementioned mechanisms, with spike-adding due to both folded node and folded saddle singularities. \Rev{Furthermore, our study focuses on a parameter region where the interaction between the folded singularities and the existing delayed Hopf is not apparent. In contrast, the interplay between the dynamics at the folded node and the folded saddle influences the bursting structure.}
 
The main contributions of this paper are threefold. We provide a detailed theoretical and numerical exploration of how the intrinsic dynamics of a neuronal model could be finely tuned by controlling the frequency and/or the amplitude of the external input. Specifically, the dynamics of interest are  bursting patterns characterized by a given number of spikes within each burst. 
We also link our theoretical results to an explicit biological question: how patterns of spikes or bursts of action potentials recorded  from neurons {\it in vivo} correlate with underlying brain rhythms like delta (0.5-4~Hz), theta (4-8~Hz), alpha (8-12~Hz), beta (13-35~Hz) or gamma (35-70~Hz) \cite{buzsakiWatson2012,TsodyksEtAl1996} or otherwise, how electrical stimulation modulates the number of spikes, and/or organize them in bursts, in neurons \cite{AdamEtAl2022, YokoiEtAl2019}. 
\Rev{Lastly, we contribute new results to the general theory of bursting by identifying distinct dynamical mechanisms for burst patterns to emerge and gain or lose spikes  in three-dimensional systems with three timescales.}

The paper is organized as follows. In Sec.~\ref{Sec:Full_Model}, we  introduce the  model: a two-dimensional FHN system with periodic input. We analyze its dynamics numerically \Rev{by constructing spike-adding diagrams that show how the number of spikes per input cycle varies with the frequency and amplitude of the input. We also provide a biological motivation for our study.}
In Sec.~\ref{Sec:Fast_slow}, 
we reformulate the slowly-driven two-dimensional FHN model into a three-dimensional {\it fast/slow/super-slow} system with three distinct timescales, and we  investigate this system using geometric singular perturbation theory. 
We state our main results in  Sec.~\ref{Sec:Spike_adding}: we show theoretically how spikes are added to or removed from bursts by linking the system's trajectories to folded singularity equilibria. 
In Sec.~\ref{Sec:spike-count-boundary}, 
we describe how the spike-adding mechanisms identified in Sec.~\ref{Sec:Spike_adding} define  the boundaries in parameter space that separate regions with different spike counts.
\Rev{In Sec.~\ref{sec:ML}, we demonstrate that a similar spike-adding structure occurs in the more realistic, periodically forced Morris–Lecar neuronal model.} 
We conclude with a short discussion of our results and future directions in \Rev{Sec.~\ref{Sec:discussion}}.   Details of analytical calculations presented in this paper are included in the Appendix.

\section{FHN model with slowly varying force drive}\label{Sec:Full_Model}
Consider the FitzHugh-Nagumo model
\begin{subequations} \label{E:FHN-1}
\begin{align}
\dfrac{dx}{dt} &= x-\frac{1}{3} x^3 -y - a +E \sin(\omega t) \\
\dfrac{dy}{dt}  &= \eps (x - b y) 
\end{align}
\end{subequations}
with positive parameters $a$, $b$, $\eps$, and input function $I(t)=E\sin(\omega t)$. Typically $\eps$ takes very small values ($\eps \ll 1$), so we will fix it here to $\eps=0.08$.
We will also set parameters $a$, $b$ to values
  $a=0.875$, $b=0.8$, so the unforced FHN system (\ref{E:FHN-1}a)-(\ref{E:FHN-1}b) has a unique equilibrium point, i.e., when $E=0$ the system's nullclines  have a single intersection point. 
 The non-autonomous term $E\sin(\omega t)$ is  periodic      with positive amplitude $E$ and angular frequency $\omega$ (simply referred here as {\it ``frequency"}). 

For our choice of parameters $a$, $b$ and if we replace the input with a constant, $I(t) = E_0$, the FHN model \eqref{E:FHN-1}  exhibits two subcritical Hopf bifurcations. Furthermore, near each of these bifurcation points the system has a   saddle-node of limit cycles (SNLC) bifurcation (see e.g., \cite{davison2019mixed}). For $E_0$ values smaller than the first Hopf point at $E_{01}$, the equilibrium of the  system is stable and $x$ ({\it ``the voltage''}) does not spike. As $E_0$ increases past $E_{01}$, the equilibrium becomes unstable and the system's trajectories are attracted to a stable limit cycle, and $x$ starts spiking. When $E_0$ passes through the second Hopf point, $E_{02}$, the equilibrium point becomes stable once again, and $x$ becomes saturated. As shown in \cite{davison2019mixed}, the first SNLC bifurcation occurs for input amplitudes slightly smaller than $E_{01}$, at $E_{SN1}$, where a  stable and an unstable limit cycle emerge. Then, as $E_0$ approaches the subcritical Hopf point, the amplitude of the unstable limit cycle decays to zero.  On the other hand, the  stable limit cycle increases its amplitude as $E_0$ approaches the Hopf point $E_{01}$, and persists until  $E_0$ approaches the second SNLC bifurcation point at $E_{SN2}$. 
The parameter region $E_{SN1} < E_0 < E_{01}$ \Rev{is of order $\varepsilon$; within an exponentially small interval near $E_{SN1}$,} small amplitude limit cycles exist. The small amplitude limit cycles sharply transition to large amplitude relaxation oscillations in this narrow parameter space, a phenomenon known as a \textit{canard explosion} \Rev{\cite{Carter2020Spike-Adding, Nowacki2012Dynamical, Penalva2024Dynamics}}. 
Solutions that spend non-negligible time along repelling regions of the phase space \Rev{(i.e., near the repelling portion of slow manifolds)} before making fast transitions to stable regions \Rev{(the attracting portion of slow manifolds)} are called \textit{canard solutions}. 

Since the canard solutions of the FHN model with constant input exist only in a narrow parameter space, the solutions are not robust and  challenging to compute numerically. A similar result, but for $E_{02} < E_0 < E_{SN2}$ is obtained near the upper subcritical Hopf point $E_{02}$.

In contrast to the FHN model with constant input, which only exhibits resting, regular spiking, or saturating modes, the FHN model with periodic input may exhibit bursting for an appropriate range of amplitude $E$ and frequency $\omega$. Bursting dynamics are characterized by intervals of spiking activity separated by periods of rest.
In this work, we investigate the behavior of system \eqref{E:FHN-1} when $E$ varies  between $0$ and $1$, and  $\omega$ takes values between $0$ and $1.2$ (see Figure~\ref{fig:bifurcation_E_vs_w}), paying particular attention to the dynamics obtained when $\omega \ll 1$, of order $\bigO(\eps)$ or smaller (Figure~\ref{fig:bifurcation_E_vs_w_region_I}). 
 
We computed numerically a \Rev{spike-adding} diagram for the FHN model with periodic input,  and projected it on the parameter plane $(\omega, E)$; Figures~\ref{fig:bifurcation_E_vs_w}-\ref{fig:bifurcation_E_vs_w_region_I} and Appendix~\ref{Subsec:appendix-simulations} for simulation details. This \Rev{spike-adding} diagram  depicts the boundaries between regions where the system's bursting dynamics exhibit a different  ratio $m:n$, where $m$ is the number of large amplitude oscillations, or spikes in $x$, and $n$ is the number of input periods ($T = 2\pi/\omega$), as $\omega$  and $E$ vary. For example, in Figure~\ref{fig:bifurcation_E_vs_w}, region \PM{\textbf{A}},  the input frequency $\omega$ is very small ($\omega \ll \eps$, where $\eps = 0.08$) and the system admits bursting as a sequence of spikes occurring over each input period (Figure~\ref{fig:bifurcation_E_vs_w}; right panel \PM{\textbf{A}}). 
 In Figure~\ref{fig:bifurcation_E_vs_w},  region \PM{\textbf{B}},  $\omega$  falls in the range $\bigO(\eps)$ to $\bigO(\sqrt\eps)$ as $\sqrt\eps \approx 0.3$, and  the system exhibits tonic spiking,  with one spike  over each input period. Then, in region \PM{\textbf{C}}, where the input frequency $\omega$ is between  $\bigO(\sqrt\eps)$ and $\bigO(1)$, the system exhibits mixed mode oscillations:  a mixture of $m<n$ spikes and $n-m$ small amplitude oscillations occurs over $n$ input periods (Figure~\ref{fig:bifurcation_E_vs_w}, right panel \PM{\textbf{C}}:  the occurrence of one spike and one small amplitude oscillation over two input periods). 
  Lastly, in region \PM{\textbf{D}}, where $\omega$ is large  ($\omega=\bigO(1)$), no spikes occur and only  small amplitude oscillations may exist.  In summary, the FHN model with periodic drive exhibits busting dynamics, and  the number of spikes in each burst increases as $E$ increases and $\omega$ decreases (see also Figure~\ref{fig:bifurcation_E_vs_w_region_I}). 
\begin{figure}
    \centering
    \includegraphics[width=1\textwidth]{./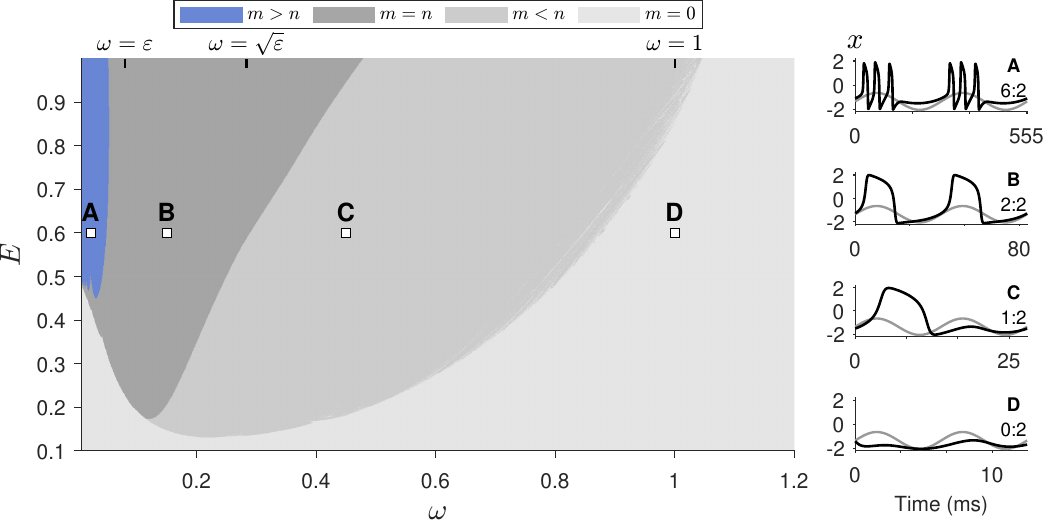}
    \caption{Left panel: The \Rev{spike-adding} diagram of FHN system \eqref{E:FHN-1} is shown as the frequency $\omega$ and amplitude $E$ of the periodic input vary. 
    Four regions (from left to right) indicate four different behaviors of the system in terms of its number of spikes ($m$) versus the number of input periods ($n$). 
    \PM{\textbf{A}} The system exhibits bursting, i.e., $m>n$. This region is plotted in more detail in Figure~\ref{fig:bifurcation_E_vs_w_region_I}; \PM{\textbf{B}}  The system exhibits tonic spikes, i.e.,  $m=n$;  \PM{\textbf{C}}  The system exhibits mixed mode oscillations, i.e., $m<n$;  \PM{\textbf{D}} No spikes occur ($m=0$). 
    Right panel: A trajectory of each region from the left panel is shown over two periods of the input ($n=2$) and $m=6, 2, 1$, and $0$, respectively. 
    The ticks at the top of the figure depict possible values  $\omega=\eps=0.08$, $\omega = \sqrt\eps\approx 0.3$, $\omega=1$,  given that $\eps=0.08$ in system \eqref{E:FHN-1}.}
    \label{fig:bifurcation_E_vs_w}
\end{figure}

In this paper, we extend the numerical results shown in Figures~\ref{fig:bifurcation_E_vs_w} and \ref{fig:bifurcation_E_vs_w_region_I}, to a theoretical study of the FHN model \eqref{E:FHN-1} with {\it slowly varying force drive}. That means, we will focus our analysis on  region \PM{\textbf{A}} from Figure~\ref{fig:bifurcation_E_vs_w} where the frequency of the input is very small, $\omega \ll \eps$,  say $\omega = \bigO(\eps \sqrt\eps)$ or lower. In this region, for a fixed value of $\omega$, over each input period $n=1$, the system could generate either no spikes ($m=0$), or 
one spike ($m=1$) or a burst with multiple spikes ($m>1$); see Figure~\ref{fig:bifurcation_E_vs_w_region_I}, regions \PM{\textbf{a}} through \PM{\textbf{d}}. 
\begin{figure}
    \centering
    \includegraphics[width=1\textwidth]{./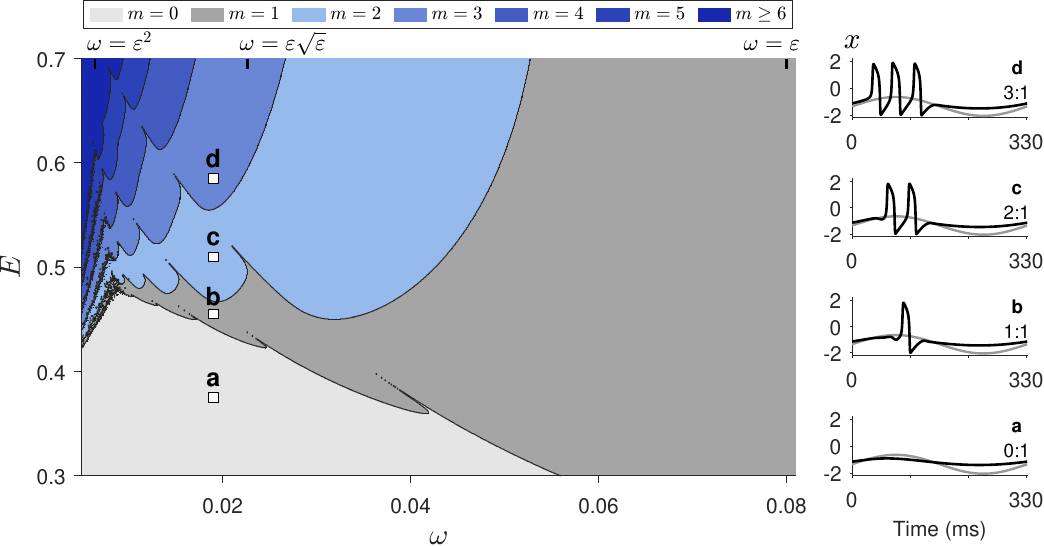} 
    \caption{Left panel: An enhancement of the blue region from Figure~\ref{fig:bifurcation_E_vs_w}, which {marks the spike-adding diagram of FHN system \eqref{E:FHN-1} as the frequency $\omega$ changes between 0 and $\eps$ and amplitude $E$ varies between 0.3 and 0.7}.  
    For each input period, the system can exhibit no spikes ($m=0$, light-gray), one spike ($m=1$, gray), or a burst with multiple spikes ($m>1$, diverse shades of blue). 
    The squares indicate parameter values for exemplary trajectories.
    Right panel: Solutions of \eqref{E:FHN-1}, with $(\omega,E)$ indicated by \PM{\textbf{a}--\textbf{d}} on the left panel, plotted over one period of the input ($n=1$). Shown here is an increasing sequence of $E$ values with $\omega$ fixed that yields increasing spike counts. Note that $\eps=0.08$ in system \eqref{E:FHN-1}. }
    \label{fig:bifurcation_E_vs_w_region_I}
\end{figure}
Our  objective is to characterize the spike adding mechanism in the emerging bursts as $\omega$ and $E$ vary. To achieve this goal we will employ in the next sections methods from Geometric Singular Perturbation Theory (GSPT) \cite{fenichel1979GSPT, guckenheimer2003forced, kuehn2015multiScaleGSPT} and the theory of folded singularities and canards \cite{brons2006MMO, CurtuRubin2011,  desroches2018SIADS, desroches2010numerical, szmolyan2001R3, wechselberger2005bifurcation,   wechselberger2013nonauto}.

\subsection{Biological relevance: spike-adding in bursts of neuronal activity driven by low-frequency brain rhythms or low-frequency electrical stimulation}

 \Rev{Brain rhythms across a wide range of frequency bands play critical roles in neural communication, cognitive processing, and brain health \cite{buzsaki2004neuronal, buzsaki2012rhythms}. Delta and theta oscillations are essential for sleep regulation and memory consolidation, particularly in the hippocampus, where theta rhythms coordinate encoding and retrieval processes \cite{herweg2020theta}. Alpha and beta rhythms are associated with attentional control and sensorimotor integration, while gamma oscillations are implicated in perception, working memory, and conscious awareness \cite{cho2023gamma, buzsaki2004neuronal}. High-gamma activity, often phase-locked to slower rhythms like theta, reflects localized population spiking and is closely tied to high-level cognitive functions \cite{canolty2006high}. Disruptions across these bands are linked to various neuropsychiatric disorders, making them valuable biomarkers and potential targets for therapeutic interventions \cite{basar2013brain}. Together, these rhythms form a dynamic framework that supports efficient information processing in the brain.
}

The FHN model is a two-dimensional \Rev{simplification} of the Hodgkin-Huxley model for neuronal activity that reproduces well the neuron's {\it excitability}  (activation/ deactivation dynamics), and the large excursions in the neuron's trajectory called {\it spikes} \cite{IzhikevichFitzHugh2006}; Figure~\ref{fig:bifurcation_E_vs_w_region_I}, right panels \PM{\textbf{b}--\textbf{d}}. The original equations depicting the dynamics of the neuron's membrane potential, $dV/dt = V - V^3/3 - W + I(t)$, and a recovery variable, $dW/dt = 0.08 (V + 0.7 - 0.8 W)$, are equivalent to system (\ref{E:FHN-1}) by taking $x=V$ and $y = W-a$ for $a=0.875$. The independent variable $t$ in (\ref{E:FHN-1}), while unitless, 
 measures the physical time in milliseconds. Therefore, the angular frequency in input $I(t) = E \sin(\omega t)$ gets calculated from the true frequency $f$ (in $\mbox{Hz}$, or $\mbox{s}^{-1}$)
as $\omega = 2 \pi f/1000$. Accordingly, diverse frequency bands for possible stimulation, or brain rhythms recorded simultaneously with neuron spike trains such as  
 delta (0.5-4~Hz), theta (4-8~Hz), alpha (8-12~Hz), beta (13-35~Hz), gamma (35-70~Hz) and high-gamma (70-200~Hz) would correspond to $\omega$ in the orders of magnitude shown in Figures~\ref{fig:bifurcation_E_vs_w}-\ref{fig:bifurcation_E_vs_w_region_I}. Specifically, frequencies $0.5$~Hz, $4$~Hz, $8$~Hz, $12$~Hz, $35$~Hz, $70$~Hz, $200$~Hz define $\omega\approx 0.003$, $0.025$, $0.05$, $0.075$ (Figure~\ref{fig:bifurcation_E_vs_w_region_I}) and $0.22$, $0.44$ and $1.26$ (Figure~\ref{fig:bifurcation_E_vs_w}) respectively. At $\eps=0.08$, these values show that our analysis has biological relevance. Indeed,   by taking $\omega$ in the parameter range $\bigO(\eps^2)-\bigO(\eps \sqrt\eps)-\bigO(\eps\sqrt[4]\eps)$
 we provide a mechanistic explanation on how spikes are added or subtracted from neuronal busting patterns, when they are stimulated in the low-frequency but physiological meaningful  bands delta and theta (Figure~\ref{fig:bifurcation_E_vs_w_region_I}, in different shades of blue). We investigate this problem in the next sections.
 \Rev{To further assess the biological relevance of our results, we also demonstrate that a similar spike-adding structure occurs in the more realistic, periodically forced Morris–Lecar neuronal model. See Section~\ref{sec:ML} for more details and comparison.} 


\section{Fast-slow analysis of the forced FHN model} \label{Sec:Fast_slow}
The \Rev{spike-adding} diagrams from Figures~\ref{fig:bifurcation_E_vs_w}-\ref{fig:bifurcation_E_vs_w_region_I} illustrate how slowly varying rhythms could organize spikes in neuronal bursting. The frequency of the input, $\omega$, is  independent from the internal properties of the neuron, e.g., from parameters $a$, $b$ and $\eps$. However, since $\omega$ itself is small, it could be defined as the product $\omega = \eps \delta$ with $\eps$ and $\delta$ taking values independently from each other. In this paper we adopt this (re)definition of $\omega$.

 Based on these observations, we analyze system (\ref{E:FHN-1}) in   two singular limits. We take the following two-step approach: 

First, in Sections~\ref{Subsec: Criticalmanifold} and \ref{Subsec: Foldpoints}, we fix $\delta>0$ and consider the fast-slow  analysis as $\varepsilon\rightarrow 0$. This limit induces a one-dimensional fast problem that admits a two-dimensional surface of equilibrium points. By changing to the $\varepsilon$-slow-time, the dynamics along hyperbolic regions of this surface are characterized by a two-dimensional reduced problem. GSPT breaks down along non-hyperbolic regions and induces robust canard dynamics, which contribute to a proposed {\it spike-adding canard-based mechanism} in the forced FHN system. 

Then, in Section~\ref{Subsec: SuperSlowManifold}, we fix $\varepsilon>0$ and take the limit 
 $\delta\rightarrow 0$. This limit establishes a two-dimensional fast problem that admits a curve of equilibrium points. Changing to the $(\varepsilon\delta)$-slow-time gives a one-dimensional reduced problem that characterizes the system's dynamics near hyperbolic  points on this curve. In this case, GSPT breaks down at non-hyperbolic points and induces  delayed-Hopf dynamics. \Rev{We discuss this case for completion purposes.} However, we note that   a {\it spike-adding delayed-Hopf-based} mechanism that\Rev{, in principle, may interact} with the aforementioned canard-induced mechanism \Rev{is not discernible in our parameter regime of interest, illustrated in Figure \ref{fig:bifurcation_E_vs_w_region_I}.} 

\begin{remark}
    Canard generation  has also been studied in  
 \cite{Bold2003forced, Burke2016Canards,guckenheimer2003forced}. 
When the system had two time scales, and the forcing term depended on low-frequency $\omega$ of order $\bigO(\eps)$, the authors showed that 
primary maximal canards induced by folded saddle-nodes of type I exist.
 Then, in the  intermediate-frequency regime 
  ($\omega = \bigO(\sqrt{\eps})$) and the high-frequency regime  ($\omega = \bigO(1)$), which involve three and two time scales respectively, the forced van der Pol system  was shown to exhibit torus canards. In contrast to these previous works, our paper focuses on the case where $\omega \ll \eps$, specifically the case of $\omega$ of order $\bigO(\eps\sqrt{\eps})$, $\bigO(\eps^2)$, or lower. This hypothesis  introduces three distinct time scales in the system, which produce spike-adding mechanisms through both folded saddle and folded node singularities.
\end{remark}
 
\subsection{Deriving the three-dimensional FHN  and the desingularized system} \label{S:derive-desingSyst} 

The time-dependency of the forced system \eqref{E:FHN-1} introduces a challenge when using analytic methods, as many theoretical statements from dynamical systems hold only for autonomous systems. So, we consider the augmented state space by defining the variable $\theta = \omega t$. 
We write the input frequency as $\omega =\varepsilon \delta$ where $\delta$ is chosen independently of $\varepsilon$. If $\delta = \mathcal{O}(1)$, then the forcing frequency is $\mathcal{O}(\varepsilon)$, and two timescales are present in the forced FHN system \eqref{E:FHN-1} - the voltage variable $x$ is the fast variable; the recovery variable $y$ and the forcing term are comparatively slow. As $\delta$ decreases from $\mathcal{O}(1)$ through $\mathcal{O}(\sqrt{\varepsilon})$ to $\mathcal{O}(\varepsilon)$, three timescales emerge, with the forcing term introducing a ``super-slow" dynamics. 

 Along with the change of variable $Y = y + a - E \sin(\omega t)$, system \eqref{E:FHN-1} becomes the three dimensional autonomous system:
\begin{subequations} \label{E:FHN-2}
\begin{align}
\dfrac{dx}{dt}  &= x-\frac{1}{3} x^3 - Y \label{E1:FHN-2}\\
\dfrac{dY}{dt}  &= \eps (x - b Y + a b - E b \sin \theta - E \delta \cos \theta) \label{E2:FHN-2}\\
\dfrac{d\theta}{dt}  &= \eps \delta\label{E3:FHN-2}.
\end{align}
\end{subequations}
Typically in fast-slow analysis, one needs to consider how equilibria of the original system interact with equilibria of the reduced system. Here, due to the constant dynamics in \eqref{E3:FHN-2}, the three dimensional system \eqref{E:FHN-2} admits no equilibrium points. Instead, we will consider the interaction of equilibria in the reduced system(s) obtained from taking the limits $\varepsilon\rightarrow0$ and $\delta \rightarrow 0$, separately.

For system \eqref{E:FHN-2}, the $x$-nullcline is the cubic function $Y = x -\frac{1}{3} x^3$ with a local minimum at $(x_L, Y_L) = (-1, -\frac{2}{3})$, which we will refer to as the \textit{left-knee},  and local maximum  at $(x_R, Y_R) = (1, \frac{2}{3})$, which we will refer to as the \textit{right-knee}. We shift the left-knee to the origin with the translation $u = x- x_L$, $v = Y - Y_L$ and obtain
\begin{subequations} \label{E:FHN-3}
\begin{align}
\dfrac{du}{dt}&= - v + F(u)\\
\dfrac{dv}{dt} &= \eps (u -b v + \mu - R_\delta \cos (\theta - \varphi_\delta)) \\
\dfrac{d\theta}{dt} &= \eps \delta \end{align}
\end{subequations}
where
\begin{eqnarray}
F(u) &=& u^2-\frac{1}{3} u^3 \label{E:F} \\
\mu &=& b\left(a+2/3\right) - 1 \approx 0.233  > 0\label{E:mu} \\
R_\delta &=& E \sqrt{b^2 + \delta^2}  \label{E:R}
\end{eqnarray}
and
\begin{subequations} \label{E:phi}
\begin{align}
&\cos (\varphi_\delta) = \frac{\delta}{\sqrt{b^2 + \delta^2}} \\
&\sin (\varphi_\delta) = \frac{b}{\sqrt{b^2 + \delta^2}} \label{E:sinphi}\\
&\varphi_\delta \in \left(0, \frac{\pi}{2}\right), \quad \varphi_\delta  \to \frac{\pi}{2} \,\, \mbox{as} \,\, \delta \to 0   \label{E:phi1}.
\end{align}
\end{subequations}

The subsequent analysis is performed using system \eqref{E:FHN-3} in variables $(u,v,\theta)$. However, we will represent all figures in terms of the original variables, $(x,y,\theta)$. 
In particular, the  fast variable $u$ is transformed to the original voltage variable $x$ with the shift $x = u + x_L=u-1$. Similarly, we write $y = v -2/3 - a + E \sin \theta$. Note that the two-dimensional surface derived from the limit $\eps\to0$  in system \eqref{E:FHN-3} has identical cross sections for fixed values of $\theta$; however,  this surface drawn in the original coordinates $x$, $y$ has cross sections that vary with $\theta$. Without loss of generality, we assume that $\theta\in[0,2\pi)$. 

\subsection{Critical manifold}
\label{Subsec: Criticalmanifold}
To perform a fast-slow analysis, we fix $\delta>0$ and take the singular limit of \eqref{E:FHN-3} as $\varepsilon\rightarrow 0$. In the limit, the variables $v$ and $\theta$ are constant and the associated \textit{fast  layer} problem of \eqref{E:FHN-3} is 
\begin{equation*}\label{E:Fast1}
\dfrac{du}{dt} = -v + F(u)
\end{equation*}
The set of equilibrium points for the fast layer problem defines the \textit{critical manifold},
\begin{equation*}
\CM = \{ (u,v,\theta) \in \R^2 \times [0, 2\pi) \,| \, \mathcal{F}(u,v) := - v + F(u) = 0\} \notag.
\end{equation*}
The critical manifold is a surface with attracting and repelling regions as determined by the eigenvalues of $F'(u)$. Attracting regions of $\CM$ occur where $\mathcal{F}_u(u,v) = $ $F'(u) < 0$, whereas regions with $\mathcal{F}_u(u,v) = $ $F'(u) > 0$ are repelling. 
Points in $\CM$ where $F'(u)=0$ are non-hyperbolic. Here, $F'(u)=-u(u-2)$, which motivates separating $\CM$ into attracting, repelling, and non-hyperbolic regions:
\begin{equation*}
    \CM  = \CMl \cup \Fl \cup \CMc\cup \Fr \cup \CMr \notag
\end{equation*}
where
\begin{eqnarray}
\CMl &=&  \{ (u,v,\theta) \,| \, \theta \in [0, 2\pi),  v = F(u) , u< 0 \} \notag \\
\Fl &=&  \{ (0,0,\theta) \, | \, \theta \in [0, 2\pi)  \} \notag \\
\CMc &=&  \{ (u,v,\theta) \,| \, \theta \in [0, 2\pi) ,   v = F(u) , 0 < u <2 \} \notag \\
\Fr &=&  \{ (2,\frac{4}{3}, \theta) \, | \, \theta \in [0, 2\pi) \} \notag \\
\CMr &=&  \{ (u,v,\theta) \, | \, \theta \in [0, 2\pi),  v = F(u) , u>2 \} \notag 
\end{eqnarray}
For reading purposes, we point out that the subscripts `$a$' and `$r$' refer to attracting and repelling regions, respectively. The signs `$+$' and `$-$' denote the signs of $u$ (or $x$)-values in each region - not the signs of $F'(u)$. 
Solutions to the fast layer problem are organized into ``layers" for fixed $v$ and $\theta$, so that solutions with initial conditions not on $\mathcal{M}$ make fast jumps towards attracting regions of $\CM$ and away from the repelling regions.
The critical manifold and it sub-regions are visualized using the original $x$, $y$, $\theta=\omega t$ coordinates in Figure~\ref{fig:CM_3D_2D}A.

Changing system (\ref{E:FHN-3}) to the slow time, $\tau_{1} = \varepsilon t$,  gives rise to the slow system,
\begin{subequations} \label{E:FHN-uvth-slow}
    \begin{align}
        \eps \frac{du}{d\tau_1} &= - v + F(u) \\
        \frac{dv}{d\tau_1} &= u -b v + \mu - R_\delta \cos (\theta - \varphi_\delta) \\
        \frac{d\theta}{d\tau_1} &= \delta.
    \end{align}
\end{subequations}
For the slow system, the limit $\varepsilon\to0$  gives the two-dimensional \textit{reduced slow} system,
\begin{subequations}\label{E:FHN-slow-reduced}
    \begin{align}
        \dfrac{dv}{d\tau_1} &= u-bv+\mu-R_{\delta}\cos\left(\theta-\varphi_{\delta}\right) \\
        \dfrac{d\theta}{d\tau_1} &= \delta ,
    \end{align}
\end{subequations}
which is accompanied by the algebraic equation $0 = -v+F(u)$. The algebraic equation recapitulates the critical manifold $\CM$ for $\theta\in[0,2\pi)$, and so the dynamics for the slow reduced problem \eqref{E:FHN-slow-reduced} occur along $\mathcal{M}$. 
Fenichel theory \cite{fenichel1979GSPT} states  that the dynamics on $\CM$ in the reduced slow system persist as slow dynamics for the full system \eqref{E:FHN-3}, with $0<\varepsilon\ll 1$,  along $\CM^{\varepsilon}$, a smooth perturbation of $\CM$. 
The stability properties of $\CM^{\varepsilon}$ are inherited from hyperbolic regions of $\CM$. 
To distinguish the perturbation $\CM^{\varepsilon}$ from the critical manifold $\CM$ derived from the singular limit, we will refer to $\CM^{\varepsilon}$ as the \textit{slow manifold}.
The distinction of the critical manifold $\CM$ from the perturbation $\CM^{\varepsilon}$ follows the convention described in \cite[Chapter 3]{kuehn2015multiScaleGSPT}.
For non-hyperbolic regions of $\CM$, Fenichel theory breaks down, possibly allowing for  richer dynamics in the original system near  these points. For $\CM$, the non-hyperbolic points occur along $\Fl$ and $\Fr$, called \textit{fold lines} (see dashed lines in Figure~\ref{fig:CM_3D_2D}A), where $F'(u)=0$.

Taking the derivative with respect to the slow-time of the algebraic equation $0=-v+F(u)$ that defines $\CM$,  gives $dv/d\tau_1 = F'(u)\times du/d\tau_1$. Thus, system \eqref{E:FHN-slow-reduced} is singular where $F'(u)=0$ (along fold lines) and is desingularized with the state-dependent timescale change $\tau_1 = -F'(u) \tau_D$,
\begin{equation} \label{E:FHN-desing}
    \begin{aligned}
        \dot{u} &=  R_\delta \cos (\theta - \varphi_\delta) - G(u) \\
        \dot{\theta} &= \delta u (u-2),
        \end{aligned}
\end{equation}
where dots denote derivatives with respect to $\tau_{D}$ and
\begin{equation} \label{E:fcG}
G(u) =  \mu + u - b F(u)  = \mu + u - b \left( u^2 - \frac{1}{3} u^3  \right)
\end{equation}
that satisfies $G'(u) = 1+bu(u-2) > 0$ for any real $u$ (i.e., $G(u)$ is strictly increasing) since $0<b<1$. The desingularized system \eqref{E:FHN-desing} retains the qualitative structure of the slow-system \eqref{E:FHN-slow-reduced}, with the exception that the vector field arrows are reversed on $\CMc$ where $-F'(u) < 0$.

We estimate the interval in which $u$ varies using the fast fibers of system (\ref{E:FHN-3}). Along $\Fl$, we have that $u=0$ and $F(0)=0$. 
So, a trajectory that makes a fast jump at $\Fl$ will again intersect $\CM$ when the cubic function satisfies $F(u)=0$, which occurs when $u=3$. 
Similarly, a trajectory that makes a fast jump at $\Fr$, exits when $u=2$, and intersects $\CM$ when $F(u)=F(2)=4/3$. 
This intersection occurs at $u=-1$. 
With these values, we approximate the range for $u$ as $u\in[u_{\text{min}},u_{\text{max}}] = [-1,3]$, or in the original coordinates, $x\in[x_{\text{min}},x_{\text{max}}]=[-2,2]$.

\subsection{Fold points and their stability analysis}
\label{Subsec: Foldpoints}
Due to the state-dependent timescale transformation that defines the desingularized system, all equilibrium points of \eqref{E:FHN-desing} occur along the fold lines and are called \textit{folded equilibria.} 
Equilibrium points of the desingularized system (\ref{E:FHN-desing}) satisfy $u=0$, $R_\delta \cos (\theta - \varphi_\delta) = G(0)$ and $u=2$, $R_\delta \cos (\theta - \varphi_\delta) = G(2)$, respectively.
Note that both $G(0)$ and $G(2)$ are positive ($G(0) = \mu$ and $G(2) = \mu + 2 - 4b/3 = \mu +\frac{4}{3}\left(\frac{3}{2} -b \right)>0$ since $\mu>0$, $0<b<1$). 

To maintain consistency across all figures, we ensure that all equilibria are chosen such that $\theta \in[0,2\pi)$. We then obtain the following result:

\begin{proposition}[Existence of folded equilibria] \label{Th:FoldedEq}
Consider $G$, $\mu$ from \eqref{E:fcG}, \eqref{E:mu},  $R_\delta$, $\varphi_\delta$ defined by \eqref{E:R}, \eqref{E:phi}, and parameters $a=0.875$, $b=0.8$  
 as in Sec.~\ref{Sec:Full_Model}.
Then:

i) System \eqref{E:FHN-3} admits 
two folded singularities on the left knee curve $\Fl$ of $\CM$ if and only if the amplitude $E$ of the  FHN input signal is large enough, i.e., 
\begin{equation} \label{E:saddle-node-left}
 E^{*}_{\ell,\delta} = \frac{ab-(1 - 2b/3)}{\sqrt{b^2 + \delta^2}} < E.
\end{equation}
These folded singularities, denoted by $P_{S\ell}$ and $P_{N\ell}$, are given by
\begin{eqnarray}
u_{S\ell} &=& 0, \quad  v_{S\ell} = 0, \quad \theta_{S\ell} = \varphi_\delta + \arccos \frac{G(0)}{R_\delta} \label{E:saddle} \\
u_{N\ell} &=& 0, \quad  v_{N\ell} = 0, \quad \theta_{N\ell} = \varphi_\delta - \arccos \frac{G(0)}{R_\delta}
+2\pi\quad (\text{mod}\; 2\pi).
\label{E:node}
\end{eqnarray}

ii) System \eqref{E:FHN-3} admits two folded singularities on the right knee curve $\Fr$ of $\CM$ if and only if the amplitude $E$ of the FHN  input signal is larger, i.e., 
\begin{equation} 
 E^{*}_{r,\delta} = E^{*}_{\ell,\delta} + \frac{2-4b/3}{\sqrt{b^2 + \delta^2}} < E.
 \label{E:saddle-node-right}
\end{equation}
These folded singularities are given by
\begin{eqnarray}
u_{Sr} &=& 2, \quad v_{Sr} = \frac{4}{3}, \quad \theta_{Sr} = \varphi_\delta - \arccos \frac{G(2)}{R_\delta} +2\pi\quad (\text{mod}\; 2\pi)\label{E:saddle-right} \\
u_{Nr} &=& 2, \quad v_{Nr} = \frac{4}{3}, \quad \theta_{Nr} = \varphi_\delta + \arccos \frac{G(2)}{R_\delta}. \label{E:node-right} 
\end{eqnarray}

iii) System \eqref{E:FHN-3}  has a pair of folded singularities on the left knee curve $\Fl$ but no folded singularities on $\Fr$ if the amplitude $E$ of the FHN input satisfies the condition
\begin{equation} \label{E:condE}
E^{*}_{\ell,\delta}  < E < E^{*}_{r,\delta}  \quad \Longleftrightarrow \quad 
\frac{ab-(1 - 2b/3)}{\sqrt{b^2 + \delta^2}} < E < \frac{ab+(1-2b/3)}{\sqrt{b^2 + \delta^2}}. 
\end{equation}
\end{proposition}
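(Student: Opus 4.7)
The plan is to solve the equilibrium equations of the desingularized system \eqref{E:FHN-desing} directly on each fold line, then translate the solvability conditions into conditions on the amplitude $E$ to recover the thresholds $E^{*}_{\ell,\delta}$ and $E^{*}_{r,\delta}$.

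First, I would use $\dot{\theta}=\delta u(u-2)=0$ to conclude that any equilibrium of \eqref{E:FHN-desing} has $u_{*}\in\{0,2\}$, i.e., lies on $\Fl$ or $\Fr$. Substituting $u=u_{*}$ into $\dot{u}=0$ yields the scalar equation
\begin{equation*}
R_\delta\cos(\theta-\varphi_\delta)=G(u_{*}),
\end{equation*}
with $G(0)=\mu>0$ and $G(2)=\mu+2-4b/3>0$ by \eqref{E:fcG} and the assumed signs of $\mu$ and $b$. Because the right-hand side is strictly positive, solutions $\theta\in[0,2\pi)$ exist if and only if $R_\delta\ge G(u_{*})$, and the solution set consists of two distinct angles precisely when the inequality is strict.

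Second, I would unfold $R_\delta=E\sqrt{b^2+\delta^2}$ and rearrange $R_\delta>G(u_{*})$ to $E>G(u_{*})/\sqrt{b^2+\delta^2}$. For $u_{*}=0$ this becomes $E>\mu/\sqrt{b^2+\delta^2}$; using $\mu=b(a+2/3)-1=ab-(1-2b/3)$, this matches $E^{*}_{\ell,\delta}$ as in \eqref{E:saddle-node-left}. For $u_{*}=2$ the condition becomes $E>(\mu+2-4b/3)/\sqrt{b^2+\delta^2}$, which equals $E^{*}_{\ell,\delta}+(2-4b/3)/\sqrt{b^2+\delta^2}=E^{*}_{r,\delta}$, establishing \eqref{E:saddle-node-right}. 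When the strict inequality holds, the two solutions are $\theta-\varphi_\delta\equiv\pm\arccos(G(u_{*})/R_\delta)\pmod{2\pi}$; adding $\varphi_\delta$ and reducing modulo $2\pi$ yields the four angle formulas in \eqref{E:saddle}--\eqref{E:node} and \eqref{E:saddle-right}--\eqref{E:node-right}. The $v$-coordinate of each equilibrium is simply $F(u_{*})$, giving $v=0$ on $\Fl$ and $v=4/3$ on $\Fr$. This proves (i) and (ii).

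Third, part (iii) is an immediate consequence. Since $G(2)>G(0)$, we have $E^{*}_{r,\delta}>E^{*}_{\ell,\delta}$; the condition $E^{*}_{\ell,\delta}<E<E^{*}_{r,\delta}$ therefore simultaneously activates the existence criterion on $\Fl$ from (i) and blocks it on $\Fr$ from (ii), producing a pair of folded equilibria on $\Fl$ and none on $\Fr$. Unpacking $E^{*}_{\ell,\delta}$ and $E^{*}_{r,\delta}$ through $\mu=ab-(1-2b/3)$ and $G(2)-G(0)=2-4b/3$ recovers the explicit inequality \eqref{E:condE}.

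The main source of care, rather than any deep obstacle, is verifying that the two $\theta$-values in each pair fall inside $[0,2\pi)$ after adding or not adding the ``$+2\pi\ (\textrm{mod}\ 2\pi)$'' correction shown in \eqref{E:node} and \eqref{E:saddle-right}. Since $\varphi_\delta\in(0,\pi/2)$ by \eqref{E:phi1} and $\arccos(G(u_{*})/R_\delta)\in(0,\pi/2)$ whenever the strict existence condition holds, $\varphi_\delta+\arccos(\cdot)$ already lies in $(0,\pi)$, whereas $\varphi_\delta-\arccos(\cdot)$ may fall in $(-\pi/2,0)$ and hence requires the $+2\pi$ shift to land in $[0,2\pi)$. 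The labeling of the two members of each pair as ``saddle'' $P_{S\cdot}$ versus ``node'' $P_{N\cdot}$ is a stability matter treated outside this proposition, so for the existence claim here it suffices to exhibit two distinct equilibria with the stated coordinates.
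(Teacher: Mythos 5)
Your proposal is correct and follows essentially the same route as the paper's own proof: restrict to the fold lines via the equilibrium conditions of the desingularized system, reduce to $R_\delta\cos(\theta-\varphi_\delta)=G(u_*)$ with $G(u_*)>0$, translate solvability into the thresholds on $E$, and verify the placement of the two angles in $[0,2\pi)$ using $\varphi_\delta\in(0,\pi/2)$ and $\arccos(G(u_*)/R_\delta)\in(0,\pi/2)$. No gaps.
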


\begin{proof}
i) By the definition of $\Fl$, an equilibrium of \eqref{E:FHN-3} on $\Fl$ must satisfy $u=v=0$, and hence, $\mu - R_\delta \cos(\theta-\varphi_\delta)=0$, i.e., $\theta = \varphi_\delta \pm\arccos(\mu/R_\delta) = \varphi_\delta \pm\arccos(G(0)/R_\delta)$.
We note that $G(0)=\mu > 0$ and $R_\delta$ depends on $E$ with $\mu$, $R_\delta$ defined by \eqref{E:mu}, \eqref{E:R}. Under these constraints, the singularities on the left knee curve $\Fl$ exist and are distinct if and only if $G(0)/ R_\delta < 1$, or $0< \mu / (E\sqrt{b^2 + \delta^2}) < 1$ which is exactly the inequality \eqref{E:saddle-node-left}. Obviously, the ratio $G(0)/R_\delta$ could be $1$, but then $\theta_{Sl} = \theta_{Nl}$ that corresponds to a unique equilibrium (a folded saddle-node) on $\Fl$. We exclude this case from consideration. 

 Since  $G(0)/R_\delta>0$, we have $\arccos(G(0)/R_\delta)\in(0,\pi/2)$.
Coupling these bounds with the fact that $\varphi_{\delta}\in(0,\pi/2)$ (see property \eqref{E:phi1}), we  find a possible range for $\theta_{S\ell}$,  $\theta_{S\ell}\in(0,\pi)$. Thus,  no modification is required for the coordinates of this equilibrium.
On the other hand, $\varphi_{\delta} - \arccos(G(0)/R_\delta)\in(-\pi/2,\pi/2)$, so we may need to  modify the $\theta$-coordinate for this equilibrium by calculating its associated principal angle. This shift will bring the coordinate inside interval $[0,2\pi)$, which means that  either $\theta_{N\ell}\in(0,\pi/2)$ or $\theta_{N\ell}\in (3\pi/2,2\pi)$. 

ii) Similarly, by the definition of $\Fr$, an equilibrium of \eqref{E:FHN-3} on $\Fr$ must satisfy $u=2$, $v=4/3$, and hence, $G(2) - R_\delta \cos(\theta-\varphi_\delta)=0$, i.e., $\theta = \varphi_\delta \pm\arccos(G(2)/R_\delta)$.
Given that $G(2)>0$,  the singularities on the right knee curve $\Fr$ exist and are distinct if and only if $G(2)/R_\delta < 1$, i.e.,  
$0< (\mu +2 - 4b/3)/\sqrt{b^2 + \delta^2} < E$ that is equivalent to \eqref{E:saddle-node-right}.

For the equilibria on $\Fr$, we modify  $\theta_{Sr}$ eventually, as above, and find that either $\theta_{Sr}\in(0,\pi/2)$ or $\theta_{Sr}\in (3\pi/2,2\pi)$. On the other hand $\theta_{Nr}\in(0,\pi)$ by default. 

iii) Last statement is true because $E^{*}_{r,\delta}$ is always greater than $E^{*}_{\ell,\delta}$ for $0<b<1$.  
\end{proof}

\begin{figure}
    \centering    
    \includegraphics[scale=1]{./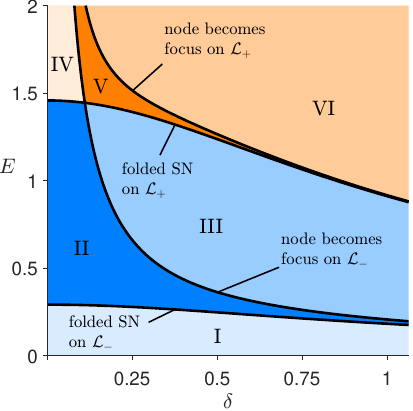}
    \caption{
    \textbf{Bifurcation diagram for the three-dimensional FHN.}
    Parameter regions according to folded equilibria:
    The blue regions (I, II, and III) show the primary parameter region considered in this paper for spike-adding. In regions I, II, III the FHN model admits folded equilibria only on $\Fl$. 
    The lines separating regions I from II and II from III are $E=E^{*}_{\ell,\delta}$ and $E=E^{**}_{\ell,\delta}$, with $E^{*}_{\ell,\delta}$, $E^{**}_{\ell,\delta}$ given by \eqref{E:saddle-node-left}, \eqref{E:Enode} respectively.   The orange regions (IV, V, and VI)  admit additional folded equilibria that lie on $\Fr$. 
    The lines separating regions III from V, and V from VI, are  $E=E^{*}_{r,\delta}$ and $E=E^{**}_{r,\delta}$ with $E^{*}_{r,\delta}$, $E^{**}_{r,\delta}$  given by Eqs.~\eqref{E:saddle-node-right} and \eqref{E:Enode-right}. 
    \Rev{Note that the horizontal axis closely resembles that of Figure~\ref{fig:bifurcation_E_vs_w_region_I} where $\omega=\varepsilon\delta\in[0,0.08]$ and $\varepsilon=0.08$.} }
  \label{F:ROI}
\end{figure}

\begin{remark} \label{R:bifdiag}
 According to formula \eqref{E:saddle-node-left}, the lower bound $E^{*}_{\ell,\delta}$ for $E$ increases when $\delta$ decreases. Furthermore, its limiting value can be estimated as $\delta \to 0$ and it is  $E^*_{\ell,0} = a+\frac{2}{3} - \frac{1}{b} \approx 0.2917 >0$. In the parameter plane $(\Rev{\delta},E)$, 
 the curve $E=E^{*}_{\ell,\delta}$ defines a folded saddle-node bifurcation on $\Fl$  (Figure~\ref{F:ROI}, this curve separates region I from II). In a similar manner, the curve $E=E^{*}_{r,\delta}$ defines a folded saddle-node bifurcation on $\Fr$  and $E^{*}_{r,\delta}$ increases as $\delta \to 0$ with limiting value $E^{*}_{r,0} = E^*_{\ell,0} + 2/b-4/3 \approx 1.4583$ (Figure~\ref{F:ROI}, the curve separates regions III and V, and region II from IV). Clearly, for parameter values below this curve (i.e., the blue-colored parameter range in Figure~\ref{F:ROI}) folded equilibria exist only on $\Fl$. 
\end{remark}

\begin{proposition}[Type and stability of folded equilibria] \label{Th:saddles-nodes}
Consider the hypotheses from Proposition~\ref{Th:FoldedEq}. Then: 

i) The folded equilibria $(u_{S\ell}, v_{S\ell}, \theta_{S\ell})$ on $\Fl$  and $(u_{Sr}, v_{Sr}, \theta_{Sr})$ on $\Fr$ (when they exist) are folded saddles. 

ii) The singularity $(u_{N\ell}, v_{N\ell}, \theta_{N\ell})$ on $\Fl$ (when it exists) is a folded node if 
\begin{equation} \label{E:Enode}
 E < E_{\ell,\delta}^{**} = \sqrt{ \left( E^{*}_{\ell,\delta} \right)^2 +    \frac{1/(8\delta)^2}{b^2 + \delta^2}}
\end{equation}
 and it is a folded focus if $E > E_{\ell,\delta}^{**}$.

iii) The folded equilibrium $(u_{Nr}, v_{Nr}, \theta_{Nr})$ on $\Fr$ (when it exists) is a folded node if 
\begin{equation} \label{E:Enode-right}
  E < E_{r,\delta}^{**} = \sqrt{ \left(E^{*}_{r,\delta}\right)^2 +    \frac{1/(8\delta)^2}{b^2 + \delta^2}}
\end{equation}
 and it is a folded focus  if $E > E_{r,\delta}^{**}$.
\end{proposition}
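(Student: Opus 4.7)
The classification of folded singularities follows the standard GSPT convention: one linearizes the desingularized system \eqref{E:FHN-desing} at each folded equilibrium and reads off the type from the eigenvalues (saddle if $\det J<0$; node if $\det J>0$ with $(\tr J)^2-4\det J>0$; focus if $\det J>0$ with $(\tr J)^2-4\det J<0$). So the first step is simply to write down
\[
J(u,\theta) = \begin{pmatrix} -G'(u) & -R_\delta \sin(\theta-\varphi_\delta) \\ \delta(2u-2) & 0 \end{pmatrix},
\]
and to evaluate at $u=0$ (on $\Fl$, with $G'(0)=1$ and $2u-2=-2$) and at $u=2$ (on $\Fr$, with $G'(2)=1$ and $2u-2=2$). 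In both cases $\tr J=-1$, and $\det J$ reduces to $\mp 2\delta R_\delta \sin(\theta-\varphi_\delta)$ (minus on $\Fl$, plus on $\Fr$).

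For part (i), the sign of $\sin(\theta-\varphi_\delta)$ is the key. From \eqref{E:saddle} and \eqref{E:saddle-right}, $\theta_{S\ell}-\varphi_\delta = \arccos(G(0)/R_\delta)\in(0,\pi/2)$ while $\theta_{Sr}-\varphi_\delta = -\arccos(G(2)/R_\delta)\in(-\pi/2,0)$ (ignoring the $2\pi$ shifts since they do not change $\sin$). Substituting these into $\det J$ yields $\det J<0$ in both cases, so both equilibria are saddles. For parts (ii) and (iii), the nodal singularities at $\theta_{N\ell}$ (where $\sin(\theta_{N\ell}-\varphi_\delta)<0$) and $\theta_{Nr}$ (where $\sin(\theta_{Nr}-\varphi_\delta)>0$) give $\det J>0$ combined with $\tr J=-1<0$, i.e.\ stable linearizations; distinguishing node from focus amounts to the sign of the discriminant
\[
(\tr J)^2 - 4\det J \;=\; 1 - 8\delta R_\delta \bigl|\sin(\theta-\varphi_\delta)\bigr|.
\]

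The final step is to translate the node condition $8\delta R_\delta |\sin(\theta-\varphi_\delta)|<1$ into the explicit bounds \eqref{E:Enode} and \eqref{E:Enode-right}. Using $\cos(\theta_{N\ell}-\varphi_\delta)=G(0)/R_\delta = \mu/R_\delta$ and the Pythagorean identity gives $|\sin(\theta_{N\ell}-\varphi_\delta)|^2 = 1-\mu^2/R_\delta^2$, so the node condition becomes $R_\delta^2-\mu^2 < 1/(64\delta^2)$. Dividing by $(b^2+\delta^2)$ and recalling $R_\delta^2=E^2(b^2+\delta^2)$ and $(E^{*}_{\ell,\delta})^2 = \mu^2/(b^2+\delta^2)$ from \eqref{E:saddle-node-left} yields exactly $E^2<(E^{**}_{\ell,\delta})^2$. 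The same argument applied at $\theta_{Nr}$ with $\cos(\theta_{Nr}-\varphi_\delta)=G(2)/R_\delta$ and $(E^{*}_{r,\delta})^2=G(2)^2/(b^2+\delta^2)$ gives \eqref{E:Enode-right}.

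The only substantive step is the algebraic reduction at the end; no step is genuinely hard, so I expect the main care to be bookkeeping signs correctly (both the $\mp$ in $\det J$ coming from the sign of $2u-2$ on $\Fl$ vs.\ $\Fr$, and the sign of $\sin(\theta-\varphi_\delta)$ for each of the four folded equilibria) and confirming that the $\mathrm{mod}\,2\pi$ shifts in \eqref{E:node} and \eqref{E:saddle-right} do not affect $\sin$ or $\cos$.
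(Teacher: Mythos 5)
Your proposal is correct and follows essentially the same route as the paper: linearize the desingularized system \eqref{E:FHN-desing} at each folded equilibrium, use $\tr J=-1$ and the sign of $\det J = 2\delta(u^*-1)R_\delta\sin(\theta^*-\varphi_\delta)$ to separate saddles from stable equilibria, and then reduce the discriminant condition $1-4\det J>0$ to the stated bounds on $E$ via $R_\delta|\sin(\theta^*-\varphi_\delta)|=\sqrt{R_\delta^2-G(u^*)^2}$ and $R_\delta^2=E^2(b^2+\delta^2)$. The sign bookkeeping and the identifications $(E^{*}_{\ell,\delta})^2=\mu^2/(b^2+\delta^2)$ and $(E^{*}_{r,\delta})^2=G(2)^2/(b^2+\delta^2)$ are all handled correctly.
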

\begin{proof}
The linearization of (\ref{E:FHN-desing}) near any equilibrium $(u^*, \theta^*)$ gives the matrix
\[
J_{u\theta} = \left[ \begin{matrix} -G'(u^*)  & -R_\delta \sin (\theta^* - \varphi_\delta)  \\
2 \delta (u^*-1) & 0 \end{matrix} \right]
\]
which has trace and determinant given by
\[
\tr\left( J_{u\theta} \right) = -G'(u^*) = -1 
\]
and 
\[
\det \left( J_{u\theta}  \right) = 2 \delta (u^*-1) R_\delta \sin (\theta^* - \varphi_\delta) = \pm 2 \delta (u^*-1) \sqrt{R_\delta^2 - G(u^*)^2}\;.
\]
The trace is clearly negative for any equilibrium. For $\theta_{N\ell}$ and $\theta_{Sr}$, we have that $\sin(\theta^* - \varphi_{\delta}) = -\sin[\arccos(G(u^*)/R_\delta)] < 0$; see  \eqref{E:node}-\eqref{E:saddle-right} with $G(u^*)>0$. Since $u^*=0<1$ on $\Fl$ and  $u^*=2>1$ on $\Fr$, the determinant $\det\left(J_{u\theta}\right)$ is positive for $\theta_{N\ell}$ (thus the folded equilibrium is asymptotically stable) and negative for $\theta_{Sr}$ (a \Rev{folded} saddle). A similar calculation shows that $\sin(\theta^*-\varphi_\delta) =  \sin[\arccos(G(u^*)/R_\delta)] > 0$ for $\theta_{S\ell}$ (where $u^*=1$ thus a \Rev{folded} saddle) and $\theta_{Nr}$ (where $u^*=2$ thus an asymptotically stable equilibrium). We note that the equilibrium is a {\it folded node} only when $1 - 4 \det\left( J_{u\theta} \right) > 0$, which at the equilibrium is equivalent to  $1/(8\delta)^2  + G(u^*)^2 > R_\delta^2$. By direct replacement of $u^*=0$ on $\Fl$ and $u^*=2$ on $\Fr$, and using the definition of $G$, $\mu$ from \eqref{E:fcG}, \eqref{E:mu}  we obtain the results in (ii) and (iii). 
\end{proof}

\begin{remark}
The curve $E=E_{\ell,\delta}^{**}$ with $E_{\ell,\delta}^{**}$ defined by \eqref{E:Enode} separates regions   II from III and IV from V in Figure~\ref{F:ROI}. From \eqref{E:Enode} we observe that $E_{\ell,\delta}^{**} \to \infty$ as $\delta \to 0$, so this curve intersects $E=E_{r,\delta}^{*}$ (i.e., the curve that separates region III from region V) for some small value of $\delta$. In particular, at $a=0.875$, $b=0.8$ we find the intersection of these curves at $\delta \approx 0.0957$, i.e., at $\omega \approx 0.008 = \bigO(\eps^2)$. For larger values than $\delta \approx 0.0957$ the curve marking the transition on $\Fl$ from a folded node to a folded focus lies below the curve of saddle-node bifurcation on $\Fr$. Another observation is that at $\delta=1$ (or, $\omega=\eps$) we get $E=E^{**}_{\ell,1} \approx 0.2067$ approaching very closely $E=E^{*}_{\ell,1} \approx 0.1822$.  Finally, the curve $E=E_{r,\delta}^{**}$ with $E_{r,\delta}^{**}$ defined by  \eqref{E:Enode-right} separates regions V and VI in Figure~\ref{F:ROI}. We find that $E_{r,\delta}^{**} \to \infty$ as $\delta \to 0$ and that the curve $E=E_{r,\delta}^{**}$ approaches curve $E=E_{r,\delta}^{*}$ when $\delta \to 1$, with $E_{r,1}^{**} \approx 0.9162$ near  $E_{r,1}^{*} \approx 0.9110$.
\end{remark}

\subsubsection*{Eigenvalues and eigenvectors of folded singularities}
System \eqref{E:FHN-3}'s dynamics  on the critical manifold $\CM$ (and, by perturbation, on the slow manifold $\CM_\eps$) are highly influenced
by the  attractive/repelling manifolds of the folded nodes and folded saddles. Near the fold points on $\Fl$ and $\Fr$, the directions of attraction (for the \Rev{folded} nodes) or of attraction and repulsion (for the \Rev{folded} saddles) are well approximated  by the corresponding eigenvectors. We calculate these quantities below.

\begin{proposition} \label{Th:eig}
Consider the folded saddles and folded nodes (when they exist) from Proposition~\ref{Th:saddles-nodes}, and $\mu$ defined by \eqref{E:mu}. Then, in the desingularized system~\eqref{E:FHN-desing}

i) the folded saddle 
on $\Fl$ has coordinates $(u_{S\ell}, \theta_{S\ell})$ with eigenvalues 
\begin{eqnarray}
\lambda_{S\ell,1} &=& -\frac{1}{2} - \frac{1}{2} \sqrt{1+ 8 \delta 
\left[ \, E^2 b^2 - \mu^2 + E^2 \delta^2 \, \right]^{1/2}}   \notag \\
\lambda_{S\ell,2} &=& -\frac{1}{2} + \frac{1}{2}\sqrt{1+ 8 \delta 
\left[ \, E^2 b^2 - \mu^2 + E^2 \delta^2 \, \right]^{1/2} }\,,  \notag 
\end{eqnarray}
and corresponding eigenvectors
\[
\vec{V}_{S\ell,1} = (\lambda_{S\ell,1} \, , \, - 2\delta)^\top, \quad
\vec{V}_{S\ell,2} = (\lambda_{S\ell,2} \, , \,  - 2\delta)^\top.\]

ii) the folded node 
on $\Fl$ has coordinates $(u_{N\ell}, \theta_{N\ell})$ with eigenvalues 
\begin{eqnarray}
\lambda_{N\ell,1} &=& - \frac{1}{2}  - \frac{1}{2}\sqrt{1- 8 \delta
\left[ \, E^2 b^2 - \mu^2 + E^2 \delta^2 \, \right]^{1/2}}  \notag \\
\lambda_{N\ell,2} &=& -\frac{1}{2} + \frac{1}{2}\sqrt{1- 8 \delta 
\left[ \, E^2 b^2 - \mu^2 + E^2 \delta^2 \, \right]^{1/2}}   \notag 
\end{eqnarray}
and corresponding eigenvectors
\[\vec{V}_{N\ell,1} = (\lambda_{N\ell,1} \,, \, - 2\delta)^\top , \quad 
\vec{V}_{N\ell,2} = (\lambda_{N\ell,2}, - 2\delta)^\top.\]

iii) the folded saddle 
on $\Fr$ has coordinates $(u_{Sr}, \theta_{Sr})$ with eigenvalues 
\begin{eqnarray}
\lambda_{Sr,1} &=& -\frac{1}{2} - \frac{1}{2} \sqrt{1+ 8 \delta 
 \left[ \, E^2 b^2 - (\mu + 2 -4b/3)^2  + E^2 \delta^2 \, \right]^{1/2} }   \notag \\
\lambda_{Sr,2} &=& -\frac{1}{2} + \frac{1}{2} \sqrt{1+ 8 \delta \left[ \, E^2 b^2  - (\mu + 2 -4b/3)^2 + E^2 \delta^2 \, \right]^{1/2} }  \notag 
\end{eqnarray}
and corresponding eigenvectors 
\[
\vec{V}_{Sr,1} = (\lambda_{Sr,1}\, , \, 2\delta)^\top, \qquad
\vec{V}_{Sr,2} = (\lambda_{Sr,2} \,, \, 2\delta)^\top.
\]

iv) the folded node 
on $\Fr$ has coordinates 
$(u_{Nr}, \theta_{Nr})$ with eigenvalues
\begin{eqnarray}
\lambda_{Nr,1} &=& -\frac{1}{2} - \frac{1}{2} \sqrt{1- 8 \delta
 \left[ \, E^2 b^2  - (\mu + 2 -4b/3)^2 + E^2 \delta^2 \, \right]^{1/2}}    \notag \\
\lambda_{Nr,2} &=& -\frac{1}{2} + \frac{1}{2} \sqrt{1- 8 \delta \left[ \, E^2 b^2  - (\mu + 2 -4b/3)^2 + E^2 \delta^2 \, \right]^{1/2}}   \notag 
\end{eqnarray}
and corresponding eigenvectors 
\[
\vec{V}_{Nr,1} = (\lambda_{Nr,1} \,, \, 2\delta)^\top , \qquad
\vec{V}_{Nr,2} = (\lambda_{Nr,2} \,, \, 2\delta)^\top.
\]
\end{proposition}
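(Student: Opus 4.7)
The plan is to piggy-back on the computation already performed in the proof of Proposition~\ref{Th:saddles-nodes}. There, the Jacobian of the desingularized system~\eqref{E:FHN-desing} at an equilibrium $(u^*,\theta^*)$ was shown to be
\[
J_{u\theta}=\begin{pmatrix} -G'(u^*) & -R_\delta\sin(\theta^*-\varphi_\delta) \\ 2\delta(u^*-1) & 0\end{pmatrix},
\]
with $\tr(J_{u\theta})=-1$ because $G'(0)=G'(2)=1$ (a direct consequence of the definition of $G$ in \eqref{E:fcG}). The eigenvalues are therefore the roots of $\lambda^2+\lambda+\det(J_{u\theta})=0$, namely
\[
\lambda_{\pm}=-\tfrac12\pm\tfrac12\sqrt{1-4\det(J_{u\theta})}.
\]
So the whole task reduces to evaluating $-4\det(J_{u\theta})$ in each of the four cases and reading off the sign inside the radical.

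For the eigenvalues, I would handle the four singularities case-by-case. On $\Fl$ one has $u^*=0$, $G(0)=\mu$, hence $2\delta(u^*-1)=-2\delta$ and $R_\delta^2-G(0)^2=E^2(b^2+\delta^2)-\mu^2$; then one uses the sign of $\sin(\theta^*-\varphi_\delta)$, which was already identified in the proof of Proposition~\ref{Th:saddles-nodes} to be positive at $P_{S\ell}$ and negative at $P_{N\ell}$. This gives
\[
-4\det(J_{u\theta})=\pm 8\delta\sqrt{E^2b^2-\mu^2+E^2\delta^2},
\]
with the plus sign at the saddle and the minus sign at the node, producing exactly the stated $\lambda_{S\ell,1},\lambda_{S\ell,2},\lambda_{N\ell,1},\lambda_{N\ell,2}$. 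For the singularities on $\Fr$ the computation is identical except that $u^*=2$ so $2\delta(u^*-1)=+2\delta$, $G(2)=\mu+2-4b/3$, and the roles of $P_S$ and $P_N$ with respect to the sign of $\sin(\theta^*-\varphi_\delta)$ are swapped. The corresponding expressions then match the formulas claimed for $\lambda_{Sr,i}$ and $\lambda_{Nr,i}$.

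For the eigenvectors I would solve $(J_{u\theta}-\lambda I)\vec V=0$ using the second row, which reads $2\delta(u^*-1)\,V_1=\lambda\,V_2$. Choosing $V_1=\lambda$ gives $V_2=2\delta(u^*-1)$, so on $\Fl$ the eigenvector is $(\lambda,-2\delta)^\top$ and on $\Fr$ it is $(\lambda,2\delta)^\top$, which matches the four stated vectors after inserting the respective eigenvalue. One should verify that the first row is automatically satisfied, which follows because the characteristic polynomial vanishes: $(-1-\lambda)\lambda-R_\delta\sin(\theta^*-\varphi_\delta)\cdot 2\delta(u^*-1)=\lambda^2+\lambda+\det(J_{u\theta})=0$. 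A small point to justify is that this parametrization is nondegenerate, i.e.\ $\lambda\ne 0$ at the saddles and nodes; this is immediate since $\det(J_{u\theta})\ne 0$ whenever the folded singularity is hyperbolic (guaranteed by the strict inequalities in Proposition~\ref{Th:FoldedEq} and Proposition~\ref{Th:saddles-nodes}).

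The main obstacle is simply bookkeeping signs: correctly tracking the sign of $\sin(\theta^*-\varphi_\delta)$ at each of the four equilibria and the sign of $2\delta(u^*-1)$ on each fold line, so that the quantity under the square root comes out with the correct $\pm$ sign distinguishing saddles (real distinct eigenvalues) from nodes/foci (complex or confluent roots). Everything else is a routine application of the quadratic formula and the Cayley--Hamilton-type identity used to construct the eigenvector.
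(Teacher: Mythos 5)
Your proposal is correct and takes essentially the same approach as the paper, whose proof consists of exactly this: a direct calculation of eigenvalues and eigenvectors from the Jacobian $J_{u\theta}$ introduced in the proof of Proposition~\ref{Th:saddles-nodes}, with your sign bookkeeping for $\sin(\theta^*-\varphi_\delta)$ and $2\delta(u^*-1)$ at each of the four folded singularities matching the paper's. (One harmless slip: the first-row residual is $-(\lambda^2+\lambda+\det J_{u\theta})$ rather than $+(\lambda^2+\lambda+\det J_{u\theta})$, which of course still vanishes.)
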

\begin{proof}
The eigenvalues and eigenvectors of the equilibria of the desingularized system~\eqref{E:FHN-desing} are obtained from direct calculation from the Jacobian of $J_{u\theta}$ (the matrix $J_{u\theta}$ was introduced in the proof of Proposition~\ref{Th:saddles-nodes}).
\end{proof}

The folded singularities on $\Fl$ are of particular interest in our analysis. One can estimate the relative order of magnitude of their eigenvalues and eigenvectors when $\delta$ becomes very small (e.g., $\delta \ll \eps$). However, this would be an extreme case of parameter choice that we do not actually consider in this paper (see Figure~\ref{fig:bifurcation_E_vs_w_region_I} and Figure~\ref{F:ROI-new}; we are mostly interested in the cases when $\delta = \bigO(\sqrt{\eps})$ or $\bigO(\eps)$). For comparison purposes, we include here a short discussion of the case $\delta \ll \eps$ then comment on the applicability of those results, under certain circumstances, to the case $\delta = \bigO(\eps)$. By expansion of the eigenvalues in Proposition~\ref{Th:eig}, we find that:
\[
\begin{cases}
\lambda_{S\ell,1} = -1 - \Lambda \, \delta + 2 \Lambda^2 \, \delta^2 + \bigO (\delta^3) \\
\lambda_{S\ell,2} = \phantom{-1 - }\, \,\, \Lambda \delta -  2 \Lambda^2 \, \delta^2 + \bigO (\delta^3)
\end{cases}
\quad \mbox{as} \quad \delta \to 0
\]
 and 
\[
\begin{cases}
\lambda_{N\ell,1} = -1 + \Lambda \, \delta + 2 \Lambda^2 \, \delta^2 + \bigO (\delta^3)  \\
\lambda_{N\ell,2} =  \phantom{-1} \, - \Lambda \delta -  2 \Lambda^2 \, \delta^2 + \bigO (\delta^3)
\end{cases}
\quad \mbox{as} \quad \delta \to 0
\]
where $\Lambda = 2(E^2 b^2 - \mu^2)$. Given the values of $b$ and $\mu$ in the FHN model ($b=0.8$, $\mu \approx 0.233$), if $E=0.6$ then $\Lambda \approx 0.35$ (see Figure~\ref{fig:bifurcation_E_vs_w_region_I} for the choice of $E$). For the expansions above to be valid we would need $\delta$ to be at order $\bigO(\eps \sqrt{\eps})$ or lower ($\eps=0.08$ implies $\eps \sqrt{\eps} \approx 0.0226$), though the approximations may still be relevant at $\delta=\eps$.

\subsection{Deriving the super-critical manifold}\label{Subsec: SuperSlowManifold}
If we fix $\varepsilon>0$ and instead take $\delta\rightarrow0$ in the slow-system~\eqref{E:FHN-uvth-slow}, the two-dimensional \textit{slow layer} dynamics are given by
\begin{equation}\label{E:FHN-uvth-slow-layer}
    \begin{aligned}
        \varepsilon\frac{du}{d\tau_1} &= -v + F(u) \\
        \frac{dv}{d\tau_1} &= u - bv + \mu - Eb \cos\left(\theta - \pi/2\right) \\
        \frac{d\theta}{d\tau_1} &= 0
    \end{aligned}
\end{equation}
where we have used that $R_\delta\rightarrow Eb$ and $\varphi_\delta\rightarrow\pi/2$ as $\delta\rightarrow0$.
The slow layer problem describes dynamics in planes with fixed $\theta = \theta_0$. 
The equilibria of \eqref{E:FHN-uvth-slow-layer} correspond to the manifold \begin{equation}\label{E:SuperSlowManifold}
    \mathcal{Z} = \left\{(u,v,\theta_0)\in\mathbb{R}^2\times [0,2\pi) \, |\, v = F(u), \  u-bv+\mu - Eb\cos(\theta_0-\pi/2) = 0\right\}.
\end{equation}
We call the one-dimensional curve $\mathcal{Z}$ the \textit{super-critical manifold} as in \cite{LetsonEtAl2017} (or alternatively, ``2-manifold" as in \cite{kaklamanos2022bifurcations}) to distinguish it from the critical manifold $\CM$. Since the first condition that defines $\mathcal{Z}$ is given by $v = F(u)$, the super-critical manifold is a curve along the critical manifold $\mathcal{M}$ (see magenta curves in Figure~\ref{fig:CM_3D_2D}). Note that we derived $\mathcal{Z}$ from the limit as $\delta\rightarrow0$ rather than taking first $\varepsilon\rightarrow0$ then $\delta\rightarrow 0$.

\begin{figure} 
    \centering
\includegraphics[width=1\textwidth]{./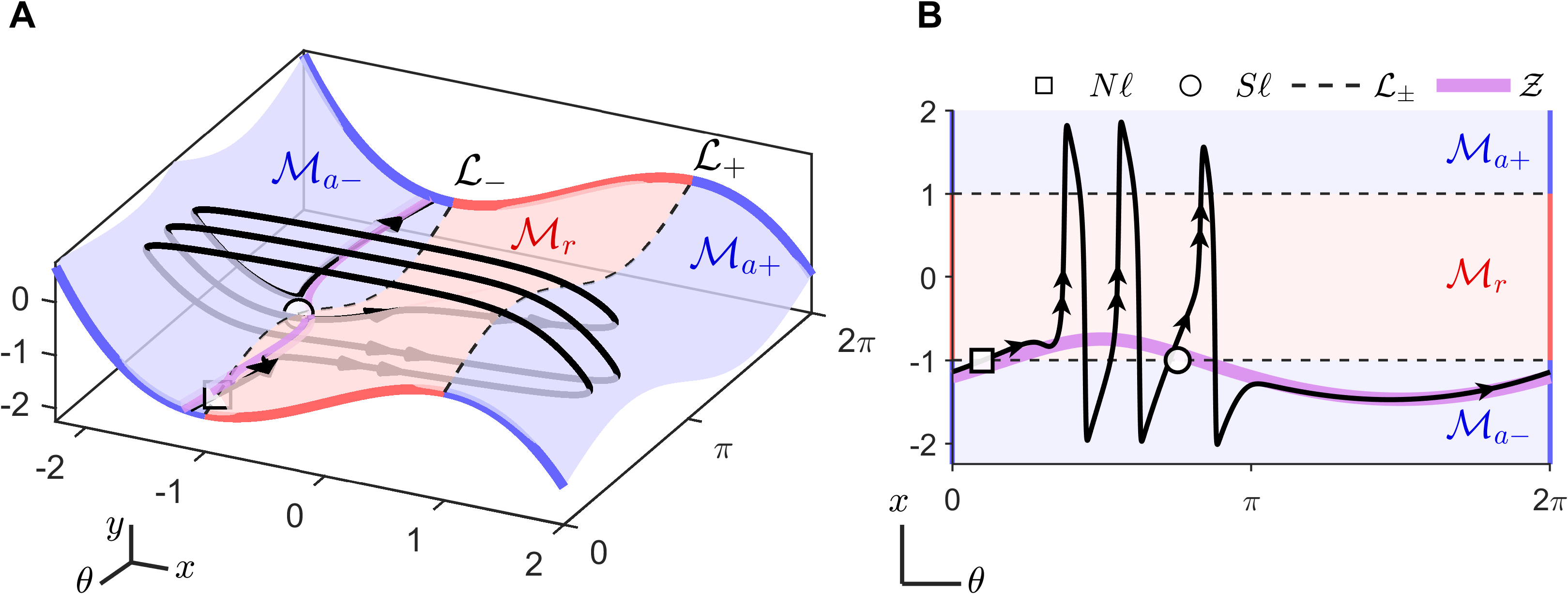}
    \caption{
    \textbf{Bursting solution with critical and supercritical manifold.} 
    \textbf{A} The critical manifold, $\CM$, is separated into attracting ($\CMl$, $\CMr$; blue) and repelling ($\CMc$; red) regions. 
    The attracting and repelling regions intersect along fold lines, $\mathcal{L}_{\pm}$. 
    The folded node ($\NL$) and \Rev{folded} saddle ($\SL$) are shown as a square and circle, respectively, along the fold lines.
    The supercritical manifold, $\mathcal{Z}$ (magenta), is a curve along the critical manifold.
    A bursting trajectory of system \eqref{E:FHN-1} with three spikes (forcing parameters $E=0.55$, $\omega=0.0149354$) is shown with $\theta =\omega t$ over one input period.
    This trajectory contains canard segments, following passage near the folded node and folded saddle, that remain near $\CMr$ for non-negligible time on the fast timescale.
    \textbf{B} The same trajectory projected onto the $\theta$--$x$ axis. Colors and symbols follow the same conventions as in panel \textbf{A}.
    }
    \label{fig:CM_3D_2D}
\end{figure}

Changing to the super-slow time, $\tau_2 = \delta \tau_1 = \delta \varepsilon t$, and letting $\delta\to0$, system \eqref{E:FHN-uvth-slow} becomes 
\begin{subequations}\label{E:FHN-SS-Slow}
    \begin{align}
        0 &= -v + F(u) \\
        0 &= u - bv + \mu - Eb \cos\left(\theta - \pi/2\right)  \\
        \frac{d\theta}{d\tau_2} & = 1
    \end{align}
\end{subequations}
which governs the super-slow reduced problem along $\mathcal{Z}$. 
At hyperbolic points of $\mathcal{Z}$, Fenichel Theory holds and the super-critical manifold $\mathcal{Z}$ perturbs to the super-slow manifold $\mathcal{Z}^{\delta}$. 
The super-slow manifold inherits the stability properties from $\mathcal{Z}$ with solutions flowing towards attracting and away from repelling regions of $\mathcal{Z}^{\delta}$. 
Just as in the slow-time analysis, GSPT breaks down at non-hyperbolic points of $\mathcal{Z}$. 
To find the points where the hyperbolic property is lost, we compute the Jacobian of slow layer (first two equations) in system \eqref{E:FHN-uvth-slow-layer},
\begin{equation}
    J_{SL} = 
        \begin{bmatrix}
            \varepsilon^{-1}F'(u) & -\varepsilon^{-1} \\
            1 & -b
        \end{bmatrix}
\end{equation}
Since the parameter $b$ is between zero and one, the determinant of $J_{SL}$ is always positive, $\det\left(J_{SL}\right) = \varepsilon^{-1}\left(1-bF'(u)\right)=\varepsilon^{-1}\left(1-b(2u-u^2)\right)>0$. Therefore, hyperbolicty is lost at Hopf bifurcation points along $\mathcal{Z}$ where $\tr(J_{SL})=0$, which occurs when $u_{dh,\pm} = 1\pm\sqrt{1-\varepsilon b}$. 

Converting to the original $(x,y,\theta)$ coordinates (with $x=u-1$, see Sec.~\ref{S:derive-desingSyst}), a Hopf bifurcation occurs at $x_{dh,\pm} = \pm\sqrt{1-\varepsilon b}$ along $\mathcal{Z}$. This Hopf bifurcation does not belong to the original system \eqref{E:FHN-2}, but instead to the {slow layer} problem \eqref{E:FHN-uvth-slow-layer}. Such bifurcations are called \textit{delayed Hopf bifurcations}
\Rev{\cite{neishtadt1987persistence, neishtadt1988persistence, hayes2016geometric, LetsonEtAl2017}.}

\begin{remark}
The critical and super-critical manifolds $\mathcal{M}$ and $\mathcal{Z}$ were derived considering the limits $\varepsilon\to0$ and $\delta\to0$ independently. That is, in each case one timescale parameter was held fixed while the other was used in the singular limit. A more interesting question would be to ask how the dynamics of the FHN model changes when both parameters $\eps$ and $\delta$  are small and of similar magnitude, where $\eps$ is associated with the intrinsic timescale of the neuronal dynamics, while $\delta$ controls the frequency of the external oscillatory driving force. This is the {\it three-timescales dynamical systems problem} that we investigate next. 
\end{remark}

\section{Spike-adding mechanisms in the FHN model with periodic forcing}\label{Sec:Spike_adding}
In this section, we will discuss  possible spike adding mechanisms for different values of the input's parameters.    

\subsection{Region of interest for bursting behavior in the 
$(\Rev{\Rev{\delta}},E)$-plane}
First, we compare our theoretical results (summarized in Figure~\ref{F:ROI}) with our numerical observations of the FHN model with periodic input (shown in Figure~\ref{fig:bifurcation_E_vs_w_region_I}). We find that parametric regions where bursting solutions maintain a given number of spikes, and the boundaries between them,  appear primarily between the curves of folded saddle-nodes on $\Fl$ and $\Fr$ respectively; see Figure~\ref{F:ROI-new}. This area is the union of regions II and III where the model has two and only two folded singularities (\Rev{folded} saddle plus \Rev{folded} node/focus), with both of them located on $\Fl$.

An exemplar bursting trajectory of the FHN model \eqref{E:FHN-1},  projected on the  $(x,y,\theta)$ space,  is illustrated in Figure~\ref{fig:CM_3D_2D}. The figure was generated by simulating system~\eqref{E:FHN-1} for a pair of parameters $(\Rev{\delta}, E)$ chosen in region II. Note that $x$, $y$ in \eqref{E:FHN-1} are calculated from $u$, $v$ and $\theta$ in  
 \eqref{E:FHN-3} as described in Sec.~\ref{S:derive-desingSyst}.  
 
As stated in Sec.~\ref{Sec:Full_Model}, our initial plan was to analyze system \eqref{E:FHN-1}'s bursting behavior for input parameters $(\omega,E)\in(0,\varepsilon]\times[0,1]$. However, our theoretical results point to a more ``natural" set of constraints for a parameter  region of interest: the area between the curves delineated by  $E^{*}_{\ell,\delta}$ and $E^{*}_{r,\delta}$ in Figures~\ref{F:ROI} and \ref{F:ROI-new} (i.e., folded SN on $\Fl$ and folded SN on $\Fr$). Note that 
 while $E^{*}_{r,1}$ is less than one,  it is nevertheless very close to it ($\delta=1$ implies $\omega=\eps$). Most importantly, these constraints were derived from and are closely related to the dynamics of the unforced FHN model, as they depend on its intrinsic parameters $a$ and $b$; see the inequalities in \eqref{E:condE}.

To conclude, we will work in the remaining sections of the paper (Sec.~\ref{S:spike-mech} and beyond) under the assumption that $E^{*}_{\ell,\delta} < E < E^{*}_{r,\delta}$ - see Figure~\ref{F:ROI-new}, \Rev{the region between the two curves labeled ``folded SN on $\Fl$" and ``folded SN on $\Fr$"}.

\begin{figure}
    \centering  \includegraphics[scale=1]{./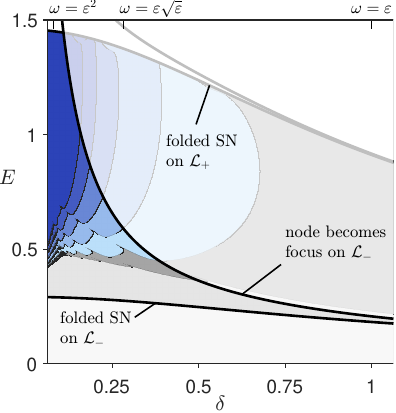}
    \caption{\textbf{Parameter region of interest for bursting}. 
      Cusp-like boundaries for changing spike counts are located where a folded node and folded saddle exist on $\Fl$ (region II in Figure~\ref{F:ROI}; see also Figure~\ref{fig:bifurcation_E_vs_w_region_I}). The regions below the saddle-node (SN) bifurcation on $\Fl$ ($E=E^{*}_{\ell,\delta}$) and above the curve  marking the transition from folded node to folded focus ($E=E^{**}_{\ell,\delta}$) are grayed-out. The parameter region above the SN on $\Fr$ ($E=E^{*}_{r,\delta}$) is shown in white. In our analysis we constrained input forcing parameters to $E^{*}_{\ell,\delta} < E < E^{*}_{r,\delta}$. 
      \Rev{Note that the horizontal axis closely resembles that of Figure~\ref{fig:bifurcation_E_vs_w_region_I} where $\omega=\varepsilon\delta\in[0,0.08]$ and $\varepsilon=0.08$.} }
  \label{F:ROI-new}
\end{figure}

\begin{figure} 
	\begin{center}
		\includegraphics[width=\textwidth]{./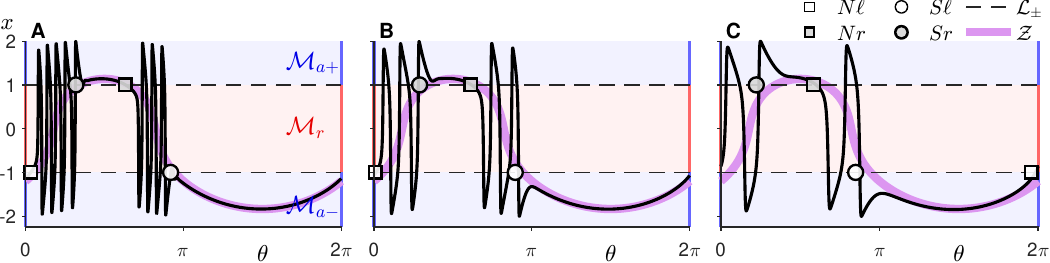}
        \caption{\textbf{Held bursting dynamics.} For input parameters satisfying $E> E_{r,\delta}^*$ (orange-shaded areas in Figure~\ref{F:ROI}), two folded equilibria emerge  on $\Fr$ (shown with gray markers between $\CM_{r}$ and $\CM_{a+}$). Notations: Branches of the critical manifold $\CM_{a\pm}$ in blue, $\CM_r$ in red; Folded curves $\Fl$, $\Fr$, dashed lines; Super-critical manifold $Z$, in purple; Folded saddles as circles, and folded nodes/foci as squares.
 Panels \emph{\textbf{A}}, \emph{\textbf{B}}, \emph{\textbf{C}} illustrate solutions to \eqref{E:FHN-1} for input parameters $(\omega,E)$  chosen from Regions IV, V, and VI in Figure~\ref{F:ROI}, respectively. The solutions are projected onto the $\theta$\,--\,$x$ plane. Note that in all three cases, there is also a pair of folded equilibria on $\Fl$ (shown with white markers between $\CM_{a-}$ and $\CM_{r}$). In these three parameter regimes, a trajectory can be ``held'' to the slow dynamics along $\CMr$ within a burst's spiking phase. We do not study these types of dynamics in our paper. }\label{fig:held-bursting}
    \end{center}
\end{figure}

\subsection{Complementary FHN dynamics in other input-based regions of the parameter plane}
Before analyzing bursting patterns in the FHN model with periodic forcing, we review again the results obtained in Figure~\ref{F:ROI}.  The figure illustrates six distinct regions of the $(\Rev{\delta}, E)$ parameter plane, characterized by the stability types of folded equilibria and their existence  when crossing $\Fl$ or $\Fr$.

\Rev{As also shown in Figures~\ref{fig:bifurcation_E_vs_w} and~\ref{fig:bifurcation_E_vs_w_region_I} numerically, Region I corresponds to parameter values for which the system does not admit bursting solutions. Therefore, we do not consider this region in our study.}

In Region II the system admits only one folded saddle and one folded node on $\Fl$, while in Region V  it admits a folded saddle and a folded focus on $\Fl$ as well as a folded saddle and a folded node on  $\Fr$. On the other hand, in Region IV  the system has exactly two folded saddles and two folded nodes, one of each on $\Fl$ and $\Fr$.

Lastly,  Region III contains the parameters for which the system admits only one folded saddle and one folded focus on $\Fl$, while in  Region VI  the system admits exactly two folded saddles and two folded foci, one of each on $\Fl$ and $\Fr$.

Note that Regions IV, V, and VI of Figure~\ref{F:ROI} (in orange) typically admit solutions whose spiking phase is briefly interrupted within a single input period. 
The spike-interruption arises with the emergence of the folded equilibria on the fold curve $\Fr$. 
Within the spiking phase, the trajectory is temporarily ``held" to slow dynamics along $\CMr^{\varepsilon}$, inducing brief periods of saturation \cite{davison2019mixed} or small amplitude oscillations in the vicinity of $\CMr^{\varepsilon}$; see Figure~\ref{fig:held-bursting} for exemplary solutions. Such oscillations have been called in the past ``held-small amplitude oscillations'' \cite{espanol2015BZ}. In this paper we are interested in characterizing the spike addition process in {\it bursts with uninterrupted spiking phases}. Therefore, we will limit our analysis to the spike-adding mechanisms identified exclusively  in Regions II and III. 

\subsection{The role of folded saddle canards and folded node canards in bursts spike adding} \label{S:spike-mech}
The number of spikes in a burst observed in either Region II or III is influenced by the proportion of the input period that occurs after a trajectory has passed through the folded node/focus on $\Fl$, but before it reaches the folded saddle. 
The phase-distance between the folded equilibria, however, is not the sole factor for the spike count within a burst. 

Firstly, a bursting sequence does not necessarily end exactly at the folded saddle.
Instead, spike termination is influenced by a trajectory's location along $\CM^{\varepsilon}$ as it passes near the plane $\theta=\theta_{S\ell}$.

Secondly, in the case of a folded node (Region II of Figure~\ref{F:ROI}), trajectories experience a delay to spiking after passing near the folded node during an excursion on the repelling portion of $\CM_\varepsilon$.
The delay to spiking shortens the phase-distance from a spike-initiation point to the folded saddle, reducing the time spent in the active phase. In the case of a folded focus (Region III of Figure~\ref{F:ROI}), solutions are guided towards the fold $\Fl$ by the equilibrium's local spirals, where they make a fast jump without an excursion along $\CMc$.

Furthermore, spike initiation out of the folded node is influenced both by canard and delayed-Hopf mechanisms 
\cite{kaklamanos2022bifurcations,LetsonEtAl2017} and the influence of these separate mechanisms shifts as the forcing frequency changes.
As $\omega$ transitions from $\mathcal{O}(\varepsilon)$ to $\mathcal{O}(\varepsilon\sqrt{\varepsilon})$, the folded node canard dynamics dominate, leading to spike initiation  at a jump-across canard or after a jump-back canard followed by a fast jump across upon returning to the fold line.
Then, as $\omega$ transitions to $\mathcal{O}(\varepsilon^2)$, the underlying delayed-Hopf bifurcation, characterized by a small-amplitude ringing oscillations, manifests at spike initiation.
The interaction between the delayed spike-initiation mechanism and the trajectory's proximity to the folded saddle results in the observed non-monotonic relationships between the number of spikes and the forcing parameters.

A detailed analysis of the delayed-Hopf mechanism for small $\omega$ and its interaction with canard mechanism is not in the scope of this paper, but will be considered in future work.

In what follows, we  focus on the parameter regions II and III of Figure~\ref{F:ROI} and 
discuss two main spike-adding mechanisms:
\begin{itemize}
\item  through the folded saddle canard (in both Regions II and III; Sec.~\ref{subsec:adding-spikes-at-saddle}), and
\item through the folded node canard (in Region II; Sec.~\ref{subsec:adding-spikes-at-node}). 
\end{itemize}

\subsection{Adding spikes through the folded saddle canard}\label{subsec:adding-spikes-at-saddle}
In this section, we describe folded-saddle canard dynamics which occur in Regions II and III of Figure~\ref{F:ROI}.

By Proposition~\ref{Th:FoldedEq}, the desingularized system~\eqref{E:FHN-desing} admits a folded saddle on $\Fl$ as long as 
$ E>E^*_{\ell,\delta}$.
However, the desingularized system was obtained by a local state-dependent time rescaling that reverses the direction of the flow on $\CMc$ compared to the flow of \eqref{E:FHN-slow-reduced} and \eqref{E:FHN-1}.
As such, desingularized solutions beginning in $\CMc$ that flow towards the \Rev{folded} saddle point $P_{S\ell}$ of coordinates $(u_{S\ell}, v_{S\ell}, \theta_{S\ell})$ (see Proposition~\ref{Th:FoldedEq})  are reversed in the slow-reduced and in the full problem. In particular, the points that lie on $W_{S\ell}^{s} \cap \CM_r$, where $W_{S\ell}^{s}$ is the stable manifold of  the \Rev{folded} saddle, are repelled in the full system \eqref{E:FHN-1} away from $P_{S\ell}$.
The curve $W_{S\ell}^{s}$ provides a path for slow solutions to pass through the \Rev{folded} saddle from $\CMl$ into $\CMc$.
This curve forms a \textit{vrai canard} (or \textit{true canard}), which perturbs to a canard segment in $\CMl^{\varepsilon}\cap\CM_{r}^{\varepsilon}$ for $\varepsilon>0$ \cite{szmolyan2001R3}. 

For the desingularized system~\eqref{E:FHN-desing} the vrai canard acts as a separatrix, distinguishing solutions that pass between $\CMl$ and $\CMc$ from those that do not. 
For the full system~\eqref{E:FHN-1} the canard serves as a threshold, distinguishing solutions that remain in the neighborhood of $P_{S\ell}$ while crossing into $\CM_{r}^{\varepsilon}$ from those that jump across to reach \Rev{$\CMr^{\varepsilon}$}   immediately \cite{wieczorek2011compost, wechselberger2013nonauto}.

 \begin{figure}
	\begin{center}
		\includegraphics[width=\textwidth]{./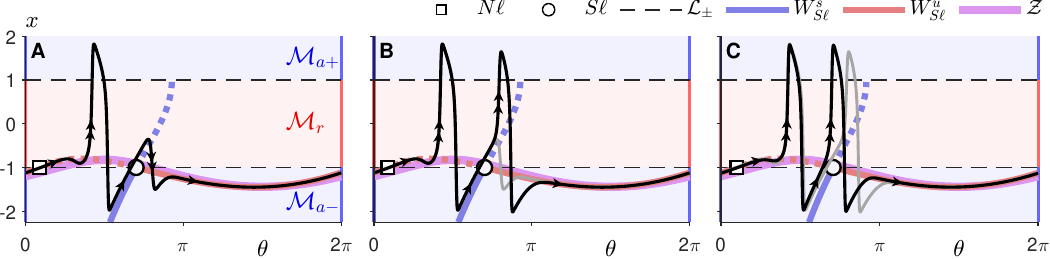}
		\caption{\textbf{Spike adding at folded saddle.} Trajectories from the full three-dimensional system \eqref{E:FHN-3} projected onto the $\theta$\,--\,$x$ plane for $E=0.482$ and \emph{(\textbf{A})} $\omega = 0.02206875$,  \emph{(\textbf{B})} $\omega=0.0220625$, and \emph{(\textbf{C})} $\omega=0.02$ (all in Region II of Figure~\ref{F:ROI}).   As a trajectory nears the termination of a burst sequence, its intersection with $\CMl$ approaches $W^s_{S\ell}$. The proximity to $W^s_{S\ell}$ determines if there is a jump-back canard (\emph{\textbf{A}}, no spike), a jump across canard (\emph{\textbf{B}}, spike), or a jump at the fold curve  (\emph{\textbf{C}}, spike) followed by a subsequent return to the super-slow manifold $Z$ (in purple). The transition from jump-back to jump-across canard marks the addition of a spike near the folded saddle (indicated by a circle). In panel \emph{\textbf{B}} (resp. \emph{\textbf{C}}) the trajectory from \emph{\textbf{A}} (resp. \emph{\textbf{B}}) is shown in gray for comparison. In the desingularized system, flows are reversed along $\CMc$; 
      the stable manifold of the \Rev{folded} saddle $W^{s}_{S\ell}$  (in blue) and the unstable manifold of the \Rev{folded} saddle $W^{u}_{S\ell}$ (in red; very close to $Z$) are drawn as dashed lines in this region.}\label{fig:spike-adding-saddle}
	\end{center}
\end{figure}
In $(x,y,\theta)$ coordinates, the curve $W^{s}_{S\ell}$ can be mapped onto the critical manifold $\CM$ and traced until it intersects  $x=-2$, which is the estimated lower bound for $x$-values in spiking solutions (Figure~\ref{fig:spike-adding-saddle}; also, see comment at the end of Sec.~\ref{Subsec: Criticalmanifold}).  
We denote $\Sigma_{\SL,-2}$ as the plane containing this intersection point, with $\theta$ fixed.
Similarly, a bursting solution approaches the  lower bound plane $x=-2$ multiple times, inducing an increasing sequence $\{\theta^1_{-2}, \dots,\theta^m_{-2}\}$. This sequence marks the $\theta$-values when the solution reaches its local minimum in the $x$-variable. 
The proximity of the sequence $\{\theta^{1}_{-2},\dots,\theta^m_{-2}\}$ to $W^{s}_{\SL}$ provides an estimate for the sequence's proximity to the separatrix, which determines whether a spike is added at the folded saddle or not.

First, consider the case when the $\theta$-sequence contains a member that lies close to $\Sigma_{\SL,-2}$ with a solution that has crossed the separatrix marked by the perturbation in $\CMl^{\varepsilon}\cup\CMc^{\varepsilon}$ of the vrai canard.
In this case, $\theta^{m}_{-2}$ (belonging to the last spike) is slightly beyond (to the right of) where the separatrix intersects $x=-2$. Note that in Figure~\ref{fig:spike-adding-saddle}, we plot in dark blue the folded saddle's stable manifold, $W^{s}_{\SL}$, which is a visual surrogate for a neighborhood of the separatrix for $\varepsilon>0$ (the $\varepsilon$-perturbed separatrix itself is not shown).
Solutions will flow towards the folded saddle into $\CMc^{\varepsilon}$ and jump back towards $\CMl^{\varepsilon}$. 
This forms a `canard without-head' segment, which does not form a spike; see Figure~\ref{fig:spike-adding-saddle}A.
The trajectory  returns to the super-slow manifold $\SCM^{\delta}$ until the next bursting sequence.

If instead $\theta^{m-1}_{-2}$ (following the second-to-last spike) approaches the intersection of the separatrix  with $\Sigma_{\SL,-2}$, and remains to the left of the separatrix, the jump-back canard gives rise to a jump-across canard. 
Here, solutions will approximately follow $W^{s}_{S\ell}$ past the fold line and into $\CMc^{\varepsilon}$, where it remains for $\mathcal{O}(1/\varepsilon)$ on the fast timescale, forming a canard segment, before exiting $\CMc^{\varepsilon}$ as a fast jump across to $\CMr^{\varepsilon}$; see Figure~\ref{fig:spike-adding-saddle}B. 
The solution follows the slow dynamics along $\CMr^{\varepsilon}$, and makes a fast jump at $\Fr$ back towards $\CMl^{\varepsilon}$. 
Upon reaching $\CMl^{\varepsilon}$, the solution flows towards and then follows the super-slow manifold $\SCM^{\delta}$, where it remains until approaching the folded node for the next bursting cycle.
We consider this case as {\it a spike added at the folded-saddle}, which is characterized by a so-called `canard with-head' segment in the final spike of the bursting oscillation.

If no member of the $\theta$-sequence is close  to the intersection of the separatrix with $\Sigma_{\SL,-2}$, then we do not consider spike-adding to occur at the \Rev{folded} saddle.
In this case, upon reaching $\theta^{m-1}_{-2}$, a solution flows towards the fold line $\Fl$  
 and makes a fast jump (relaxation oscillation) at it.
The return dynamics to $\CMl^{\varepsilon}$ bring the solution well past separatrix, leading it back to $\SCM^{\delta}$, as in the previous cases (Figure~\ref{fig:spike-adding-saddle}C).

\subsection{Adding spikes through the folded node canard}
\label{subsec:adding-spikes-at-node}

In this section, we describe folded-node canard dynamics which occur exclusively in Region II of Figure~\ref{F:ROI}. 

At the onset of an input period, bursting solutions are funneled towards the repelling region of the slow manifold through the folded node $P_{N\ell}$ of coordinates $(u_{N\ell}, v_{N\ell}, \theta_{N\ell})$ (see Proposition~\ref{Th:FoldedEq}). Upon exiting the \Rev{folded} node, trajectories remain near the repelling manifold, forming canard segments at the start of a burst. The segment escapes the repelling region by either jumping back to the lower attracting branch where it originated (Figure~\ref{fig:spike-adding-node}A,\,B; see the trajectory immediately past the square mark), or by jumping across to the upper attracting branch (Figure~\ref{fig:spike-adding-node}C):

$-$ For the jump-back case, the burst begins with a  small amplitude oscillation formed by the folded node canard segment on the slow timescale. Next, the jump-back trajectory possibly induces:  
i) a relaxation-oscillation (simple jump at the fold curve), 
ii) a canard segment with spike, 
iii) a canard segment with no spike, or 
iv) a return to the super-slow manifold.  The outcome depends on the interaction of the trajectory with the folded saddle as described in Sec.~\ref{subsec:adding-spikes-at-saddle}. 

$-$ The jump-across following passage through the folded node contributes a spike, which contains a canard segment, to the burst. The trajectory returns to the lower attracting branch, according to the return mechanism described in Sec.~\ref{subsec:adding-spikes-at-saddle}. 

 A jump-back out of the folded node does not necessarily decrease the \textit{total} number of spikes possible in a bursting oscillation. 
Suppose, for example, that an initial jump-across occurs and $n-1$ spikes follow (giving a total of $n$ spikes) before spike-termination at the \Rev{folded} saddle. A fast jump-back and subsequent return to the fold curve may leave enough time (or phase $\theta(t)$) for $n$ spikes before termination at the \Rev{folded} saddle.
To alter the number of spikes in a burst, the initial jump-back  must significantly change the trajectory's ultimate proximity to $W_{S\ell}^s$, the  stable manifold of the \Rev{folded} saddle, to undergo the spike-changes  described in the previous section.

To summarize, at the start of the input period, solutions start along the super-slow manifold, $\mathcal{Z}^{\delta}$, and are funneled towards the folded node.
After exiting the folded node, the solution stays along $\CM_r^\eps$, forming a folded node canard segment. 
This segment gives rise to a fast jump either back to $\CMl^{\varepsilon}$ or up to $\CMr^{\varepsilon}$. 
If the solution has not yet reached a neighborhood of the folded saddle, relaxation oscillation segment(s) ensue, in which the solution approaches the fold $\mathcal{L}_{\mp}$ along $\CM^{\varepsilon}_{a\mp}$ and makes fast jumps to $\mathcal{L}_{\pm}$.
If a neighborhood of the folded saddle is reached, a folded saddle canard segment is formed, giving one final jump either back to $\Fl$ or across to $\Fr$.
Finally, the solution returns to $\CMl^{\varepsilon}$, flows back towards $\mathcal{Z}^{\delta}$, and follows this curve until the bursting sequence is re-initiated.

 \begin{figure} 
	\begin{center}
		\includegraphics[width=\textwidth]{./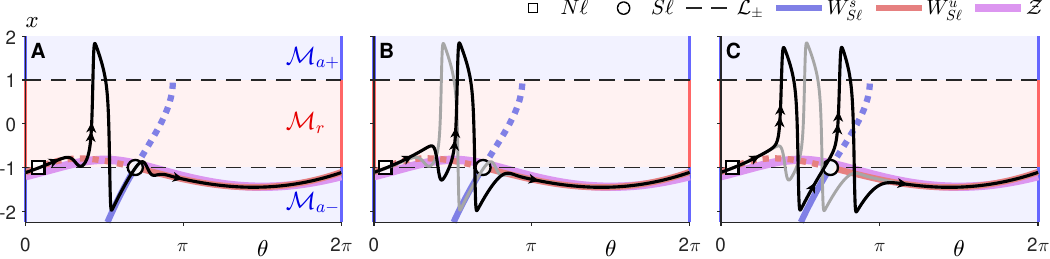}
		\caption{\textbf{Spike adding at folded node.} Trajectories from the full three-dimensional system \eqref{E:FHN-3} projected onto the $\theta$\,--\,$x$ plane for $E=0.482$ and \emph{(\textbf{A})} $\omega = 0.0236$,  \emph{(\textbf{B})} $\omega=0.02506875$, and \emph{(\textbf{C})} $\omega=0.025075$ (all in Region II of Figure~\ref{F:ROI}). Trajectories are funneled towards the folded node (square) and into $\CMc$. After an excursion along $\CMc$, trajectories either jump back towards $\CMl$ (\emph{\textbf{A}}, \emph{\textbf{B}}, no spike) or jump across towards $\CMr$ (\emph{\textbf{C}}, spike). From \emph{\textbf{A}} to \emph{\textbf{B}}, the escape from $\CMc$ towards $\CMl$ yields a small amplitude oscillation with increasing magnitude. From \emph{\textbf{B}} to \emph{\textbf{C}}, the small amplitude oscillation gives rise to a jump-across towards $\CMr$. The jump-across marks the addition of a spike out of the folded node canard. In panel \emph{\textbf{B}} (resp. \emph{\textbf{C}}) the trajectory from \emph{\textbf{A}} (resp. \emph{\textbf{B}}) is shown in gray for comparison. In the desingularized system, flows are reversed along $\CMc$, so  the stable manifold of the \Rev{folded} saddle (circle) which is $W^{s}_{S\ell}$ (in blue), and the unstable manifold $W^{u}_{S\ell}$ of the \Rev{folded} saddle (in red; very close to the super-critical manifold $Z$)
   are drawn as dashed lines in this region.}\label{fig:spike-adding-node}
	\end{center}
\end{figure}

\begin{remark}
Folded node canard dynamics occur exclusively in Region II of Figure~\ref{F:ROI}. 
As the parameter regime transitions from Region II into III, the folded node becomes a folded focus and the canards exist only at the folded saddle. The \emph{cusps} in the spike-count boundaries  appear only in Region II as well (see Figure~\ref{F:ROI-new}) $-$  they arise from the complex interaction of the jump-back dynamics at the folded node with the jump-across dynamics at the folded saddle. In Region III where there is a folded focus, the spike-adding is influenced by the decay rate of the stable spirals near the \Rev{folded} focus and the \Rev{folded} saddle-canard dynamics. However, these interactions do not induce cusp-like boundaries (transparent-blue area in Figure~\ref{F:ROI-new}).
In the next section, we  examine the interaction between the  folded node and folded saddle dynamics in more detail.
\end{remark}

\section{Determining spike count boundary}\label{Sec:spike-count-boundary}
The mechanisms discussed in Sec.~\ref{Sec:Spike_adding} explain how spikes are added or removed by linking trajectories to folded equilibria. 
Additionally, we are interested in how these mechanisms impact the boundaries in the parameter space that separate regions with different spike counts (Figure~\ref{fig:bifurcation_E_vs_w_region_I}). 
Although we do not provide an analytical derivation of these boundaries, further numerical computations allow us to describe how their features relate to the spike-adding mechanisms.

\subsection{Level sets of trajectory $L^2$ norm}\label{Subsec:L2-norm}
 Previous studies of numerically computed  canards identified the $L^2$ norm as a suitable metric to quantify canard solutions \cite{ brons2006MMO, desroches2010numerical, wechselberger2005bifurcation}. In particular, the $L^2$  norm was used to distinguish solutions at bifurcations of canards. 
 We define here the $L^2$ norm as
\begin{equation}\label{E:L2-norm}
\left(\frac{1}{T}\int_0^\top \left[x(t)^2 + y(t)^2\right]\,\mathrm{d}t\right)^{1/2}
\end{equation}
where $x(t)$ and $y(t)$ are the solutions of the FHN system \eqref{E:FHN-1}  estimated numerically, and the norm is normalized to the input period $T = 2 \pi / \omega$.  
We compute the $L^2$ norm for all numerical solutions in the $(\omega,E)$ parameter space of interest and visualize the level sets of the computed norms in Figure~\ref{fig:L2-bifurcation}. For comparison reasons, we superimposed in Figure~\ref{fig:L2-bifurcation} the spike-count boundaries from Figure~\ref{fig:bifurcation_E_vs_w_region_I} and showed a zoomed in area near a cusp.

\begin{figure} 
	\begin{center}
	\includegraphics[width=\textwidth]{./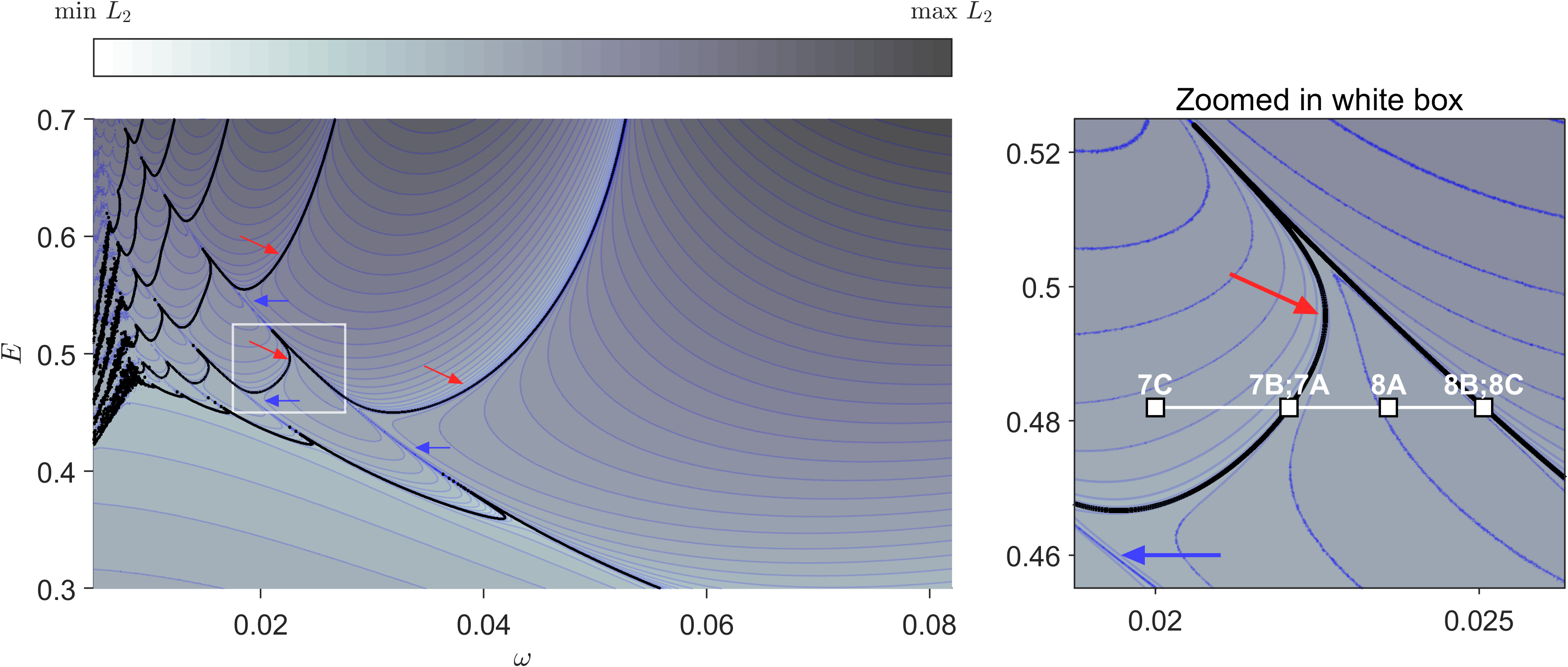}
		\caption{\textbf{Levels sets for the $L^2$ norm in simulated solutions.} The level sets were computed according to Eq.~\eqref{E:L2-norm} and super-imposed with the spike-count boundaries from Figure~\ref{fig:bifurcation_E_vs_w_region_I}. 
        Contours separating $L^2$ level sets are shown in transparent blue.
        The $L^2$ norm asymptotically approaches the super-imposed boundaries. 
        Horizontal blue arrows indicate parameter regimes near transitions from jump-back to jump-across (from left-to-right) canards at the folded node. 
        Slanted red arrows indicate regimes near transitions from jump-back to jump-across (from right-to-left) canards at the folded saddle.
        The zoomed in right-panel also marks the parameter locations used to generate the trajectories in Figures~\ref{fig:spike-adding-saddle} and~\ref{fig:spike-adding-node}. 
        Since the parameters in Figures~\ref{fig:spike-adding-saddle}A-B (resp. Figures~\ref{fig:spike-adding-node}B-C) are hardly distinguishable at the displayed scale, they are represented with a single box at the midpoint of the line connecting the parameter pairs.}\label{fig:L2-bifurcation}
	\end{center}
\end{figure}

The first observation is that the $L^2$ level sets funnel asymptotically along the spike-count boundaries (Figure~\ref{fig:L2-bifurcation}; red arrows).
This is unsurprising as one would expect a change in the number of spikes to induce a corresponding change in $L^2$.
Along these asymptotic boundaries, but away from the negatively-sloping component of the cusp, the number of spikes change due to the spike-adding mechanism at the folded saddle canard.
That is, as the parameter regime crosses these boundaries from right-to-left, a spike is added at the folded saddle, as in the transition shown in Figure~\ref{fig:spike-adding-saddle}A to Figure~\ref{fig:spike-adding-saddle}B.

We also observe level sets that  connect the cusps of the spike-count boundaries (Figure~\ref{fig:L2-bifurcation}; blue arrows).
Along these lines, the solution comes out of the folded node transition from a jump-back-canard to a jump-across canard (from left-to-right in parameter space), as shown in the transition from Figure~\ref{fig:spike-adding-node}B to Figure~\ref{fig:spike-adding-node}C. 
When these cusp-connecting-lines are away from spike-count boundaries (e.g., bottom right blue arrow in the left panel of Figure~\ref{fig:L2-bifurcation}), the transition from jump-back to jump-across at the folded node still occurs, but the spike-count does not change due to the trajectory's ultimate proximity to the folded saddle.

These two organizing features $-$ the asymptotic curves and the cusp-connecting level sets $-$ depict how the cusps form in the spike-count boundaries.
Fundamentally, cusps occur in parameter regimes in which the bifurcation between the jump-back and jump-across canards at the \Rev{folded} node leads to trajectories that ultimately land close enough to the stable manifold of the \Rev{folded} saddle to be influenced by saddle-canard dynamics.
Put another way, the cusps arise as the bifurcation curve for spike-adding at the \Rev{folded} node (Figure~\ref{fig:L2-bifurcation}; see the curves depicted by the blue arrows) collides with the bifurcation curve for spike-adding at the \Rev{folded} saddle (Figure~\ref{fig:L2-bifurcation};  see the curves depicted by red arrows).
Thus, we attribute the cusp formation to the interaction of the folded node and folded saddle dynamics.

\subsection{Phase distance}
The mechanisms of spike-adding at the folded node and folded saddle indicate how the spike-count boundaries might be derived analytically.
Estimating the time spent (or `phase' spent when considering the $\theta$\,--\,$x$ plane) on $\CMl^{\varepsilon}$ before the first jump on the fast timescale gives an approximation for the delay-to-spike within a burst.
Similarly, estimating where the stable manifold of the \Rev{folded} saddle intersects the line $x=-2$  allows an approximation for spike termination (recall that $\Sigma_{\SL,-2}$  denoted the plane containing this point).
In particular, one would consider the phase difference between these two estimates, relative to the intra-burst spiking frequency, to estimate the number of spikes in a burst.

\begin{figure}
    \begin{center}
    \includegraphics[scale=1]{./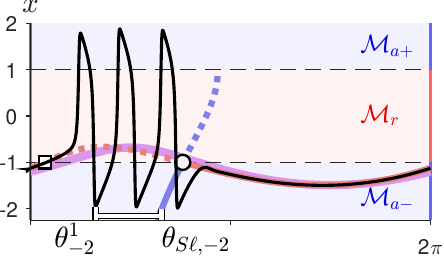}
    \caption{\textbf{Estimating duration of spiking phase.} A sample trajectory's return to the line $x=-2$ following its first spike is marked as $\theta^{1}_{-2}$. 
The intersection of the \Rev{folded} saddle's stable manifold with $x=-2$ is marked as $\theta_{\SL,-2}$.
The difference between these values, $\Delta \theta$, was used to estimate the number of spikes in a burst; see Eq.~\eqref{E:numSpikesUnshifted}. Circle/square markers and the color code for different curves are the same as in previous figures. }\label{fig:theta_diff_traj}
    \end{center}
\end{figure}

This approach is hindered by a few factors. First, predicting the bifurcation point between a jump-back and a jump-across canard at the \Rev{folded} node is difficult, as these dynamics likely involve bifurcations of canard solutions \cite{brons2006MMO, wechselberger2005bifurcation}.
Second, the stable manifold of the \Rev{folded} saddle comes from the singular limit as $\varepsilon\to0$, making it hard to exactly predict the separatrix that  distinguishes jump-back from jump-across solutions at the \Rev{folded} saddle.
And third, we do not establish an analytic relationship between the intra-burst spiking frequency and input parameters.

Despite these challenges, we will estimate proxies for each of these quantities.

For a simulated solution, we track it until first reaching $x=-2$ following a spike; this corresponds to $\theta^1_{-2}$ from the $\theta$-sequences described in Sec.~\ref{subsec:adding-spikes-at-saddle}.
We then calculate a fifth-order expansion for $W^{s}_{\SL}$ (see Appendix \ref{Sec:appendix}), which we use as an estimate for the separatrix at the folded saddle. We find where this curve intersects $x=-2$ and denote the corresponding phase value as $\theta_{\SL,-2}$ (Figure~\ref{fig:theta_diff_traj}).
Finally, we estimate the intra-burst frequency from the frequency of the constantly forced FHN neuron (i.e., when the forcing term in system \eqref{E:FHN-1}) is equal to $I(t)=E$), which we found to be between $21$ and $27\,\text{Hz}$ for the parameters in this study.

From the computed phase values, we estimate the remaining available phase for the burst after the first spike as:
\begin{equation*}
    \Delta \theta = \theta_{\SL,-2} \, - \, \theta^1_{-2}
\end{equation*}
then convert the phase difference into a temporal interval using the relation $\Delta \theta = \omega \Delta t$, with $\Delta t$ is in milliseconds. Finally, by considering the intra-burst frequency $f_{burst}$ (in $\mathrm{Hz}$), and accounting for the differences in units between frequency and time variables, we estimate that $\left[ 10^{-3} \times \Delta t \right] = [\, \text{number of spikes} -1 \, ] / f_{burst}$. Then, 
\begin{equation}\label{E:numSpikesUnshifted}
\text{number of spikes} = 1 + \left[ \frac{\Delta \theta}{1000 \times \omega} \right] \times f_{burst} \,.
\end{equation}
For numerical calculations, we set $f_{burst} = 27\,\mathrm{Hz}$ as this was the predominant frequency observed in the constantly forced system.
Within formula \eqref{E:numSpikesUnshifted}, we round to the next largest integer as a fractional quantity still allows for an additional spike at the \Rev{folded} saddle.
We add one to account for the spike that occurred before $\theta^{1}_{-2}$.

We plot contours as the number of estimated spikes transitions through the integers (Figure~\ref{fig:theta_dist_MAT}). 
Observe that the estimated boundaries follow the shape of the actual count boundaries and remain in proximity to these boundaries.
The error in estimation is attributed to the limitations of this approach outlined in the beginning of this section, namely that the estimates are based off  of quantities derived in the $\varepsilon\to0$ limit. \Rev{Indeed, when we repeated the relevant computations using smaller values of $\eps$ (not shown), we observed that the discrepancy decreased accordingly.}

\begin{figure}
    \begin{center}
        \includegraphics[scale=1]{./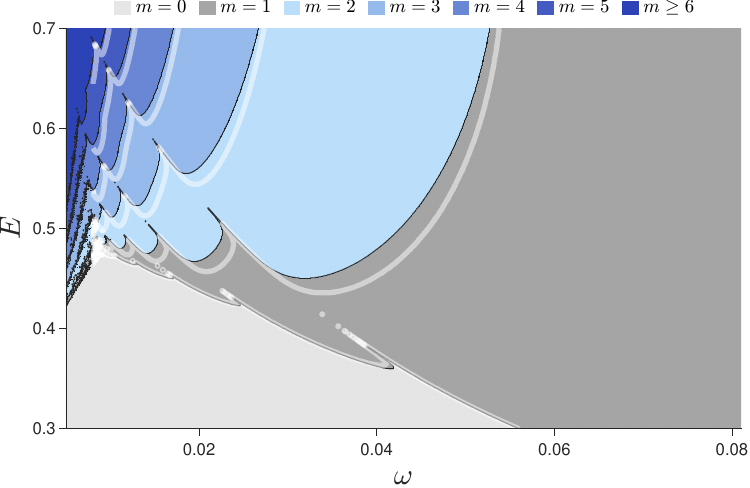}
        \caption{\textbf{Estimating spikes within a burst according to \eqref{E:numSpikesUnshifted}}. 
The transparent white curves mark where the estimated number of spikes transitions from $0\to1\to2\to\dots\to6$, when moving up and to the left in the plane (see legend).
The white curves are superimposed on the actual contours according to Figure~\ref{fig:bifurcation_E_vs_w_region_I} and only shown for $\omega > 0.008$.}
\label{fig:theta_dist_MAT}
    \end{center}
\end{figure}

\Rev{\section{Periodic forcing in the Morris-Lecar model}\label{sec:ML}

\begin{figure}
    \centering
    \includegraphics[width=\textwidth]{./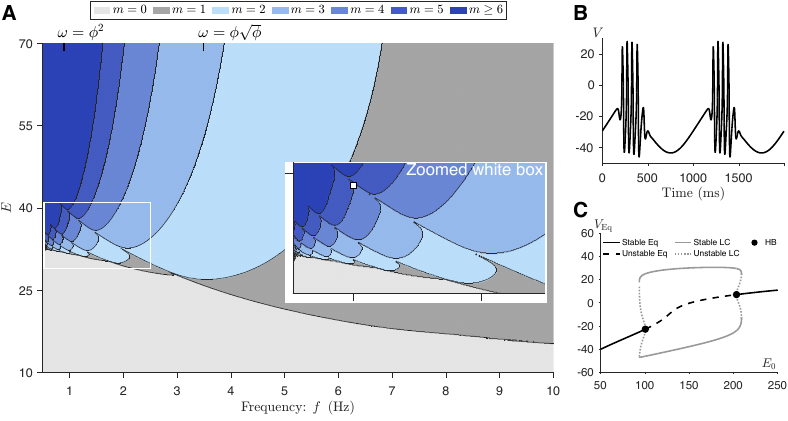}
    \caption{\Rev{\textbf{Spike-adding in the Morris-Lecar model.}
    \emph{\textbf{A}} Spike-adding transitions as forcing frequency $f$ varies from 0.5 to 10 Hz and forcing amplitude from 10 to 70 with $E_0$ fixed at 80 (see Eq.~\eqref{E:ML-input}). 
    Cusp-like transitions emerge as in the forced FHN model. 
    The white box highlights the region with a non-monotonic relationship between the number of spikes and either forcing parameter.
    We omit the portion of the gray region (1-spike or no-spikes) up to $\omega=\phi$, or $f\approx12\,\text{Hz}$.
    \emph{\textbf{B}} A sample bursting trajectory (see white scatter point in zoomed in panel of \emph{\textbf{A}}) containing putative canard-with-head segments near spike initiation and termination.
    \emph{\textbf{C}} Bifurcation diagram of the unforced Morris–Lecar model, showing two subcritical Hopf bifurcations and nearby saddle-nodes of limit cycles, similar to the unforced FHN model (see Figure 1 in \cite{davison2019mixed}).}}
    \label{fig:morris-lecar}

\end{figure}

The FHN model is a simplified dynamical model for neural dynamics which it is not based on obvious underlying biophysical principles. 
\Rev{Nevertheless, when considering a more realistic model for action potential production -- the Morris-Lecar (ML) model -- we  observed similar spike-adding transitions induced by changes in the input amplitude and/or frequency; see Figure~\ref{fig:morris-lecar}.}

The full equations for the ML model are shown in Appendix \ref{A:morris-lecar}. Briefly, we follow the model presented in \cite[Chapter 3]{ermentrout2010foundations}, which is a two-dimensional model describing the dynamics for voltage, $V$, and potassium activation, $n$:
\begin{subequations} \label{E:ML}
    \begin{align}
        C_M\dfrac{dV}{dt} &= I_{\text{app}}(t) - \gL(V-\EL) -\gK n(V-\EK)-\gCa m_{\infty}(V)(V-\ECa)\\
        \dfrac{dn}{dt}  &= \phi\left(n_{\infty}(V) - n\right) / \tau_n(V) 
\end{align}
\end{subequations}
We represent the applied current, $I_{\text{app}}$ as a function of time but choose system parameters so that the ML system with constant forcing ($I_{\text{app}}(t)\equiv E_0)$ exhibits subcritical Hopf bifurcations with nearby saddle-nodes of limit cycles; see Figure~\ref{fig:morris-lecar}C. 
This bifurcation structure closely matches the structure of the studied FHN equations with constant input \cite{davison2019mixed}.
In the case of the ML model, the first subcritical Hopf point occurs for external input $E_0\approx 100$; for the FHN model the first Hopf point occurs when the external forcing magnitude is around 0.3.
Therefore, we augment the periodic external current by shifting its baseline closer to the Hopf point for constant input,
\begin{equation}\label{E:ML-input}
   I_{\text{app}} = E_0 + E\sin(\omega t).
\end{equation}

Setting the input this way closely matches the regime for the FHN model studied in our paper. 
In simulations, we set $E_0 = 80$ and vary $E$ between 10 and 70 to maintain positive current.
We further augment the forcing frequency $\omega = (2\pi / 1000) f$, so that the parameter $f$ represents the frequency in Hz, maintaining the biophysical interpretation of the model. 
To study frequency ranges similar to the FHN model, we set the parameter $\phi = 0.075$ (close to $\varepsilon=0.08$ for FHN) and choose frequency values $f$ from $>0$ to $12\, \text{Hz}$, so that the associated parameter $\omega$ varies from $\mathcal{O}(\phi^2)$ to $\mathcal{O}(\phi)$ (see the frequency range in Figure \ref{fig:morris-lecar}A).

Like the FHN model with periodic forcing, solutions from the ML system with periodic forcing exhibit bursting patterns. 
Furthermore, some solutions contained small-amplitude oscillations near spike-initiation and termination, characteristics of the canard segments studied in the periodically forced FHN system (see Figure \ref{fig:morris-lecar}B).
In simulations that swept across forcing parameters in ranges analogous to those used in the FHN simulations, similar spike-adding boundaries emerged.
In particular, the cusp-like boundaries were present when $\omega=\mathcal{O}(\phi\sqrt{\phi})$ with the pattern terminating near $\omega=\phi^2$ in parameter space, similar to the $\mathcal{O}(\varepsilon\sqrt{\varepsilon})$ to $\mathcal{O}(\varepsilon^2)$ studied in the FHN system.
We do not perform the same detailed analysis of folded equilibria as in the FHN prototype, but we expect similar dynamics studied in the simpler FHN system to hold for the more biologically realistic Morris-Lecar model.
}

\section{Discussion}\label{Sec:discussion}

Efforts to understand the brain’s complex dynamics benefit from detailed insights into individual neurons and their interactions. Prominent examples of these complex dynamics are  bursting patterns, which are prevalent in many brain states.  These patterns, involving bursts of action potentials from neurons \textit{in vivo}, are closely associated with underlying brain rhythms such as delta, theta, alpha, beta and gamma,  that together cover frequencies between 0.5 Hz to 70 Hz (see \cite{CannontAl2014}, for a review). 

In this study, we investigated the spike-adding mechanisms in a three-dimensional neuronal model with fast, slow, and super-slow timescales, inspired by the FitzHugh-Nagumo (FHN)  model  driven by low-frequency periodic inputs\Rev{, and tested on the Morris-Lecar (ML) as well.} The FHN \Rev{and ML} model\Rev{s}, two-dimensional \Rev{reductions}
of the Hodgkin-Huxley model, effectively capture neuronal excitability and the characteristic large excursions known as spikes. 

Our analysis, based on both numerical simulations and geometric singular perturbation theory, highlighted how the number of spikes within each burst can be regulated by tuning the frequency and amplitude of the external input.   We found and characterized key bifurcations and
mechanisms such as the folded saddle and folded node canards,  that drive spike addition and shape the boundaries between regions with different spike counts. 
We identified six regions representing different behaviors in the FHN model with periodic input, based on the input’s frequency $\omega$ and amplitude $E$. A summary of the results is presented in Figure~\ref{F:ROI}. The analytically calculated boundaries between these regions  correspond to saddle-node bifurcations of folded equilibria on $\Fl$ (or $\Fr$), the left (right) knee of the critical manifold $\CM$,   or correspond to the changes in stability from \Rev{folded} node to \Rev{folded} saddle on $\Fl$ (or $\Fr$).
 We did not analyze the system's dynamics  inside Region I (where the amplitude $E$ is small) because there was  no bursting there. We focused instead  on Regions II and III (Figure~\ref{F:ROI}; $E$ of moderate values) and discussed two key spike-adding mechanisms.  
 
 One way to add spikes in the burst is  through a folded saddle canard; this situation occurred in both regions II and III.  Another mechanism, which only appears in region II,  is to create spikes through a  folded node canard.  
Furthermore, we demonstrated how these spike-adding mechanisms organize the boundaries in parameter space that separate regions with different spike counts (Figure~\ref{fig:bifurcation_E_vs_w_region_I} and Figure~\ref{F:ROI}-- right panel) by numerically calculating the $L^2$ norms of trajectories and estimating the delay-to-spike within each burst.

\Rev{We speculate that pitchfork and transcritical bifurcations of maximal canards (\cite[Figure 8.12]{kuehn2015multiScaleGSPT}), arising at specific ratios of the folded node eigenvalues, may govern the escape dynamics of solutions as they pass through the folded node. To explore this, we computed eigenvalue ratios at the folded node across the parameter space and overlaid their level sets on the spike-adding regions (figures are not shown). We observed partial alignment between low-integer ratio contours and cusp locations in the spike-adding diagrams, particularly for the first few transitions. Although these patterns became less clear at larger (biologically realistic) time-scale parameters, they support the hypothesis that canard bifurcations could influence the underlying structure of spike-adding dynamics, especially in the extreme (outside the biological relevant) multiple-timescale regime. A more systematic study of this relationship remains a promising direction for future work.}

\Rev{The bursting patterns analyzed in our paper do not fit into any of the three main known bursting types -- square, elliptic or parabolic. In particular, these bursts do not satisfy the mathematical definition of the parabolic bursting since the fast subsystem of the fast/slow/super-slow system is only one-dimensional and cannot generate spikes on its own. However, we observed in our numerical experiments (not shown here) that the bursting patterns  of the forced FHN model exhibit a \emph{parabolic-burster--like}  property. Shortly, their inter-spike intervals increase towards the end of the bursting phase, akin to those reported in classical parabolic bursting \cite{RinzelErmentrout1998}. These bursts  also show an \emph{elliptic-burst-like} feature.  Indeed, both the FHN and the ML models with constant input exhibit bistability (admittedly, in a very small parameter range), with the stable limit cycle surrounding the stable equilibrium point  e.g., see Figure \ref{fig:morris-lecar},C).   When this fast(er) dynamics is coupled with the slow dynamics of the driving force, the trajectory slowly drifts between the spiking and quiet phases like in the elliptic bursting.  Thus, while the bursts investigated in our work are neither parabolic nor elliptic, they still share some attributes with each of them.
}

\Rev{Our numerical results provide compelling evidence of a consistent structure in the spike-adding transitions and enhance the current understanding of spike-adding mechanisms.} Future research will focus on exploring the dynamics in regions IV, V, and VI of the parameter space in Figure~\ref{F:ROI}, where the amplitude $E$ is high and the system exhibits ``held-small amplitude oscillations" as defined by \cite{espanol2015BZ}. 
\Rev{We acknowledge that a full characterization of the bifurcation structure of spike-adding dynamics  is an important theoretical question that merits further investigation. This is especially true in systems with three distinct timescales and canard-induced transitions, where standard bifurcation tools may be insufficient. Developing and applying appropriate techniques to address this challenge is a natural next step in our research.}
We also plan to extend in the future this analysis to other models of neuronal excitability.

\section*{Acknowledgment}

This work was funded by NSF-RTG award 1840260 (PM), The Stanley-UI Foundation Support Organization (RC), and NSF award 2037828 and The Simons Foundation MPS--TSM--00008005 (ZA).

\section{Appendix}
\label{Sec:appendix}
\subsection{Numerical simulations for \Rev{spike-adding} diagram}\label{Subsec:appendix-simulations}
The bifurcation diagrams for spike counts (Figures~\ref{fig:bifurcation_E_vs_w},~\ref{fig:bifurcation_E_vs_w_region_I}) and $L^2$ norms (Figure~\ref{fig:L2-bifurcation}) were obtained with a parameter sweep of the forcing amplitude, $E$, and frequency, $\omega$. 
Each simulation was initialized at the equilibrium point of the unforced problem ($E=0$), $(x,y)\approx(-1.1994, -1.4993)$. 
 \Rev{We first ran the simulation for two} input periods \Rev{that were then excluded from the analysis,} to allow transient solutions to drop out. 

Spike counts were determined as the number of times the solution crossed the upper fold line, $x=1$, during the next two input periods.
The count was divided by two (to normalize according to the number of input periods) and rounded down with the floor function.
The $L^2$ norms were calculated according to \eqref{E:L2-norm} with the midpoint method for quadrature applied to simulated solutions. 

All simulations were performed in MATLAB (2023a) with the stiff differential equation solver `ode15s()' with a user-supplied Jacobian, and `RelTol' and `AbsTol' set to $10^{-8}$ and $10^{-10}$, respectively. The details of the $[\omega, E]$ parameter mesh are as follows:
\begin{itemize}
    \item  For Figure~\ref{fig:bifurcation_E_vs_w}:  $(\omega,E)\in[0.0075,1.2]\times[0.1,1]$ with a step-size in both $\omega$ and $E$ of $5\times10^{-4}$.
    \item  For Figures~\ref{fig:bifurcation_E_vs_w_region_I}~and~\ref{fig:L2-bifurcation}: we used the grid $(\omega,E)\in[0.003,0.1]\times[0.3,0.7]$ with a step-size in $\omega$ of $5\times10^{-5}$ and a step-size in $E$ of $5\times10^{-4}$.
    \item  For the zoomed-in box of Figure~\ref{fig:L2-bifurcation}:  $(\omega,E)\in[0.01875,0.026325]\times[0.455,0.525]$ with step-sizes of $6.25\times10^{-6}$ and $6.25\times10^{-5}$ for $\omega$ and $E$, respectively.
    \item \Rev{For the Morris-Lecar simulations of Figure~\ref{fig:morris-lecar}: $(f,E)\in[0.5,10]\times[10,70]$ with step-sizes of $5\times10^{-3}$ and $7.5\times10^{-2}$ for $f$ and $E$, respectively.}
\end{itemize}

\subsection{Series approximations to invariant manifolds of the \Rev{folded} saddle}\label{Sec:SeriesApproximations}
We compute series expansions for the stable and unstable manifolds of the folded saddle.
The stable eigenvector of the \Rev{folded} saddle (see Proposition~\ref{Th:eig}) is given by
\[
(u,\theta)^\top = \left(\lambda_{S\ell,1},-2\delta\right)^\top.
\]
In the $\theta \times u$ plane, this corresponds to a slope of $-\lambda_{S\ell,1}/(2\delta)$, which is finite for $\delta \neq 0$.  
So, we write the stable manifold of the \Rev{folded} saddle, $W^s_{S\ell}$, as a graph with a series expansion of $u$ in terms of $\theta$.  
But first, we translate the \Rev{folded} saddle to the origin with $\hat{\theta} = \theta - \theta_{S\ell}$, so that we can write
\begin{equation}\label{E:u_of_theta_hat}
u = a_1\hat{\theta} + a_2\hat{\theta}^2 + a_3\hat{\theta}^3 + a_4\hat{\theta}^4 + a_5\hat{\theta}^5 + \bigO(\hat{\theta}^6),
\end{equation}
with coefficients that need to be calculated. 
We first differentiate \eqref{E:u_of_theta_hat} with respect to $\hat{\theta}$,
\begin{equation}\label{E:du_dtheta_hat}
	\dfrac{du}{d\hat{\theta}} = a_1 + 2a_2\hat{\theta}+3a_3\hat{\theta}^2 + 4a_4\hat{\theta}^3 + 5a_5\hat{\theta} ^4 + \bigO(\hat{\theta}^5).
\end{equation}
Then we account for the translation of the folded saddle to the origin, replace  $\theta_{S\ell}$ and $G(0)=\mu$ according to \eqref{E:saddle} and \eqref{E:fcG}, implement the trigonometric formula $\cos (\alpha+\beta) = \cos \alpha \cos \beta - \sin \alpha \sin \beta$, and divide the equations in \eqref{E:FHN-desing}, to compute 
\begin{equation}\label{E:u_prime_by_theta_prime}
\dfrac {du}{d\hat{\theta}} = \frac{\mu \cos \hat{\theta} - \sqrt{R_{\delta}^2 - \mu^2} \sin \hat{\theta} - G(u)} {\delta u (u-2)}. 
\end{equation}

Using the series expansions $\cos \hat{\theta} = 1 - \hat{\theta}^2/2! + \hat{\theta}^4/4! + \bigO(\hat{\theta}^6)$ and $\sin \hat{\theta} = \hat{\theta} - \hat{\theta}^3/3! + \hat{\theta}^5/5! + \bigO(\hat{\theta}^7)$, and replacing  $u$ with \eqref{E:u_of_theta_hat} in the right hand-side of equation \eqref{E:u_prime_by_theta_prime}, we obtain the series
\begin{equation} \label{E:du/dtheta-b-cooeff}
\dfrac{du}{d\hat{\theta}}
 = b_0 + b_1\hat{\theta} + b_2\hat{\theta}^2+b_3\hat{\theta}^3 + b_4\hat{\theta}^4 + \bigO(\hat{\theta}^5),
\end{equation}
where coefficients $b_j$ have nonlinear dependencies on the coefficients $a_1,\dots, a_5$ and the constants $\delta$, $R_{\delta}$, and $\mu$. 
Expressions for the $b_j$ coefficients are found with the aid of \textit{Wolfram Mathematica} (see below).

The values $a_1,\dots, a_5$ are then found by solving the nonlinear system of equations defined by equating coefficients in \eqref{E:du_dtheta_hat} with those in \eqref{E:du/dtheta-b-cooeff}:
\begin{equation}\label{E:expansion_coefficient_system_of_eqs}
	\begin{aligned}
		a_1 & = b_0\left(a_1,\dots,a_5;\delta,R_\delta,\mu\right)\\
		2a_2 & = b_1\left(a_1,\dots,a_5;\delta,R_\delta,\mu\right)\\
		&\vdots\\
		5a_5 & =b_4\left(a_1,\dots,a_5;\delta,R_\delta,\mu\right)
	\end{aligned} 
\end{equation}
A solution to the nonlinear system \eqref{E:expansion_coefficient_system_of_eqs} is calculated in MATLAB with `{fsolve()}'.  

To generate a curve tangent to the stable eigenvector, we use the initial guess 
\begin{equation*}
(a_1,a_2,a_3,a_4,a_5) = \left(-\lambda_{S\ell,1}/(2\delta), 0, 0, 0, 0\right).
\end{equation*}  
To align the approximated curve with solutions of the desingularized system, we translate back to the desingularized coordinates $(u, \theta)$.

The same procedure is used to calculate the local unstable manifold of the folded saddle, $W^u_{S\ell}$, except the initial guess provided to `{fsolve()}' promotes a curve tangent to the unstable eigenvector (see Proposition~\ref{Th:eig}),
\begin{equation*}
(a_1,a_2,a_3,a_4,a_5) = \left(-\lambda_{S\ell,2}/(2\delta), 0, 0, 0, 0\right).
\end{equation*}

\subsection{\Rev{Folded} saddle invariant manifold coefficients}
We use \textit{Wolfram Mathematica} to find the coefficients in the expansion \eqref{E:du/dtheta-b-cooeff} of \eqref{E:u_prime_by_theta_prime}. In the following, we set $C=\sqrt{R_\delta^2-\mu^2}$
{\small{
\begin{align*}
	b_0 &= \frac{a_1+C}{2 a_1 \delta } \\
	b_1 & = \frac{-2 a_1^3 b+a_1^3+a_1^2 C+a_1 \mu -2 a_2 C}{4 a_1^2 \delta } \\
	b_2 & = \tfrac{1}{24a_1^3\delta}\left[-2 a_1^5 b+3 a_1^5+3 a_1^4 C-12 a_1^3 a_2 b+6 a_1^3 a_2+3 a_1^3 \mu -2 a_1^2 C + \dots\right. \notag\\
	& \quad \left. -6 a_1 a_2 \mu -12 a_1 a_3 C+12 a_2^2 C\right] \\
	b_3 & =\tfrac{1}{48 a_1^4 \delta }\left[-2 a_1^7 b+3 a_1^7+3 a_1^6 C-8 a_1^5 a_2 b+12 a_1^5 a_2+3 a_1^5 \mu + \dots\right.\notag\\
	& \quad\left.+6 a_1^4 a_2 C-24 a_1^4 a_3 b+12 a_1^4 a_3-2 a_1^4 C-a_1^3 \mu +4 a_1^2 a_2 C+ \dots\right.\notag \\
	&\quad\left. -12 a_1^2 a_3 \mu -24 a_1^2 a_4 C+12 a_1 a_2^2 \mu +48 a_1 a_2 a_3 C-24 a_2^3 C\right] \\
	b_4 & = \tfrac{1}{480 a_1^5 \delta }\left[-10 a_1^9 b+15 a_1^9+15 a_1^8 C-60 a_1^7 a_2 b+90 a_1^7 a_2+15 a_1^7 \mu +\dots\right. \notag \\
	&\quad\left. 60 a_1^6 a_2 C-80 a_1^6 a_3 b+120 a_1^6 a_3-10 a_1^6 C-40 a_1^5 a_2^2 b+60 a_1^5 a_2^2+\dots\right.\notag \\
	&\quad \left.30 a_1^5 a_2 \mu +60 a_1^5 a_3 C-240 a_1^5 a_4 b+120 a_1^5 a_4-5 a_1^5 \mu +2 a_1^4 C+\dots\right.\notag \\
	&\quad \left.10 a_1^3 a_2 \mu +40 a_1^3 a_3 C-120 a_1^3 a_4 \mu -240 a_1^3 a_5 C + \dots\right.\notag \\
	&\quad\left.-40 a_1^2 a_2^2 C+240 a_1^2 a_2 a_3 \mu +480 a_1^2 a_2 a_4 C+240 a_1^2 a_3^2 C+\dots\right.\notag \\
	&\quad\left.-120 a_1 a_2^3 \mu -720 a_1 a_2^2 a_3 C+240 a_2^4 C\right]
\end{align*}
}}

The desired coefficients $a_1,\dots,a_5$ are found by solving \eqref{E:expansion_coefficient_system_of_eqs}.  
For example, the first coefficient for the stable manifold is found by setting
\begin{equation*}
	\frac{a_1+\sqrt{R_{\delta}^2-\mu ^2}}{2 a_1 \delta } = a_1
\end{equation*}
with solution given by
\begin{align*}
	a_1 &= \frac{1 \pm \sqrt{1 + 8\delta\sqrt{R_{\delta}^2-\mu^2}}}{4\delta} 
	 	  = -\frac{1}{2\delta}\cdot\frac{1}{2} \left(-1 \mp\sqrt{1+8\delta\sqrt{R_{\delta}^2-\mu^2}}\right).
\end{align*}
Choosing the subtraction term  gives exactly $a_1 = -\lambda_{S\ell,1} / (2\delta)$ as expected for the stable manifold. The other solution gives $a_1 = -\lambda_{S\ell,2} / (2\delta)$ as expected for the unstable manifold (see Proposition~\ref{Th:eig}). 

Similarly, with the help of \textit{Wolfram Mathematica}, we can write an equation for the second expansion coefficient of the stable manifold,
\[
	a_2 = \frac{\lambda_{S\ell,1}^3(2b-1) + 2 \delta \lambda_{S\ell,1}^2 \sqrt{R_{\delta}^2-\mu^2} - 4 \mu \delta^2 \lambda_{S\ell,1}}{16\delta^2\left(\lambda_{S\ell,1}^2 + \delta \sqrt{R_{\delta}^2-\mu^2}\right)}.
\]
A similar result is obtained for the unstable manifold by replacing $\lambda_{S\ell,1}$ with $\lambda_{S\ell,2}$ in the equation above.

Rather than obtaining exact coefficients (as shown for $a_1$ and $a_2$ here), we relied on numerical solutions from {`fsolve()'} in MATLAB, as described in the previous section.

\Rev{\subsection{Morris-Lecar equations and parameters}\label{A:morris-lecar}
The system of differential Equations in Eq.~\eqref{E:ML} is supplemented by the steady-state equations,
\begin{align*}
    m_\infty(V)&=\frac{1}{2}\left[1+\tanh((V-V_1)/V_2)\right] \\
    {\tau_n}(V)&=1/\cosh((V-V_3)/(2V_4)),
\end{align*}
and voltage-dependent time constant for potassium activation
\begin{equation*}
    {n_\infty}(V)=\frac{1}{2}\left[1+\tanh((V-V_3)/V4)\right].
\end{equation*}
The parameters were chosen to induce subcritical Hopf bifurcations and are given in Table~\ref{T:ML-pars}.
\begin{table}
\caption{Parameters used (see Eq.~\eqref{E:ML}) for Morris-Lecar simulations with constant and periodic input.}\label{T:ML-pars}
\begin{center}
    \begin{tabular}{c  c c c c c c c c c c c c}
    \hline
        Parameter & $\phi$ & $\gCa$ & $V_3$ & $V_4$ & $\ECa$ & $\EK$ & $\EL$ & $\gK$ & $\gL$ & $V_1$ & $V_2$ & $C_M$ \\
        Value & 0.075 & 4.4 & 2 & 30 & 120 & -84 & -60 & 8 & 2 & -1.2 & 18 & 20 \\
    \hline
    \end{tabular}
\end{center}
\end{table}
}


\begin{thebibliography}{10}
\providecommand{\url}[1]{#1}
\csname url@samestyle\endcsname
\providecommand{\newblock}{\relax}
\providecommand{\bibinfo}[2]{#2}
\providecommand{\BIBentrySTDinterwordspacing}{\spaceskip=0pt\relax}
\providecommand{\BIBentryALTinterwordstretchfactor}{4}
\providecommand{\BIBentryALTinterwordspacing}{\spaceskip=\fontdimen2\font plus
\BIBentryALTinterwordstretchfactor\fontdimen3\font minus
  \fontdimen4\font\relax}
\providecommand{\BIBforeignlanguage}[2]{{%
\expandafter\ifx\csname l@#1\endcsname\relax
\typeout{** WARNING: IEEEtran.bst: No hyphenation pattern has been}%
\typeout{** loaded for the language `#1'. Using the pattern for}%
\typeout{** the default language instead.}%
\else
\language=\csname l@#1\endcsname
\fi
#2}}
\providecommand{\BIBdecl}{\relax}
\BIBdecl

\bibitem{Rinzel1985Bursting}
J.~Rinzel, ``Bursting oscillations in an excitable membrane model,'' in
  \emph{Ordinary and Partial Differential Equations}, B.~D. Sleeman and R.~J.
  Jarvis, Eds.\hskip 1em plus 0.5em minus 0.4em\relax Berlin, Heidelberg:
  Springer Berlin Heidelberg, 1985, pp. 304--316.

\bibitem{Rinzel1987Formal}
------, \emph{A Formal Classification of Bursting Mechanisms in Excitable
  Systems}.\hskip 1em plus 0.5em minus 0.4em\relax Berlin, Heidelberg: Springer
  Berlin Heidelberg, 1987, pp. 267--281.

\bibitem{RinzelErmentrout1998}
J.~Rinzel and B.~Ermentrout, ``Analysis of neural excitability and
  oscillations,'' in \emph{Methods in Neuronal Modeling: From Ions to Networks,
  second edition}, C.~Koch and I.~Segev, Eds.\hskip 1em plus 0.5em minus
  0.4em\relax London, England: The MIT Press, 1985, pp. 251--291.

\bibitem{Izhikevich2006Dynamical}
\BIBentryALTinterwordspacing
E.~M. Izhikevich, \emph{{Dynamical Systems in Neuroscience: The Geometry of
  Excitability and Bursting}}.\hskip 1em plus 0.5em minus 0.4em\relax The MIT
  Press, 07 2006. [Online]. Available:
  \url{https://doi.org/10.7551/mitpress/2526.001.0001}
\BIBentrySTDinterwordspacing

\bibitem{ErmentroutKopell1986}
B.~Ermentrout and N.~Kopell, ``Parabolic bursting in an excitable system
  coupled with a slow oscillation,'' \emph{SIAM J. Appl. Math.}, vol.~46, pp.
  233--253, 1986.

\bibitem{FoxRotsteinNadim2014}
D.~M. Fox, H.~G. Rotstein, and F.~Nadim, ``Bursting in neurons and small
  networks,'' in \emph{Encyclopedia of Computational Neuroscience}, D.~Jaeger
  and R.~Jung, Eds.\hskip 1em plus 0.5em minus 0.4em\relax New York, USA:
  Springer Science+Business Media, 2014, pp. 1--17.

\bibitem{Innocenti2007Dynamical}
\BIBentryALTinterwordspacing
G.~Innocenti, A.~Morelli, R.~Genesio, and A.~Torcini, ``{Dynamical phases of
  the Hindmarsh-Rose neuronal model: Studies of the transition from bursting to
  spiking chaos},'' \emph{Chaos: An Interdisciplinary Journal of Nonlinear
  Science}, vol.~17, no.~4, p. 043128, 12 2007. [Online]. Available:
  \url{https://doi.org/10.1063/1.2818153}
\BIBentrySTDinterwordspacing

\bibitem{Nowacki2012Dynamical}
\BIBentryALTinterwordspacing
J.~Nowacki, H.~M. Osinga, and K.~Tsaneva-Atanasova, ``Dynamical systems
  analysis of spike-adding mechanisms in transient bursts,'' \emph{The Journal
  of Mathematical Neuroscience}, vol.~2, 2012. [Online]. Available:
  \url{https://doi.org/10.1186/2190-8567-2-7}
\BIBentrySTDinterwordspacing

\bibitem{Sherman2001}
S.~M. Sherman, ``Tonic and burst firing: dual modes of thalamocortical relay,''
  \emph{Trends in Neuroscience}, vol.~24, pp. 122--126, Feb 2001.

\bibitem{Terman1992}
D.~Terman, ``The transition from bursting to continuous spiking in excitable
  membrane models,'' \emph{J. Nonlinear Sci.}, vol.~2, pp. 135--182, 1992.

\bibitem{XJWang1993}
X.-J. Wang, ``Genesis of bursting oscillations in the {H}indmarsh-{R}ose model
  and homoclinicity to a chaotic saddle,'' \emph{Physica D}, vol.~62, pp.
  263--274, 1993.

\bibitem{HanBi2012}
\BIBentryALTinterwordspacing
X.~Han and Q.~Bi, ``Slow passage through canard explosion and mixed-mode
  oscillations in the forced van der pol's equation,'' \emph{Nonlinear
  Dynamics}, vol.~68, no.~3, pp. 275--283, 2012. [Online]. Available:
  \url{https://doi.org/10.1007/s11071-011-0226-9}
\BIBentrySTDinterwordspacing

\bibitem{Ma2022}
\BIBentryALTinterwordspacing
X.~Ma, Q.~Bi, and L.~Wang, ``Complex periodic bursting structures in the
  rayleigh–van der pol–duffing oscillator,'' \emph{Journal of Nonlinear
  Science}, vol.~32, no.~1, p.~25, 2022. [Online]. Available:
  \url{https://doi.org/10.1007/s00332-022-09781-1}
\BIBentrySTDinterwordspacing

\bibitem{Barrio2024Dynamics}
R.~Barrio, S.~Ibanez, J.~A. Jover-Galtier, A.~Lozano, M.~A. Martinez,
  A.~Mayora-Cebollero, C.~Mayora-Cebollero, L.~Perez, S.~Serrano, and
  R.~Vigara, ``Dynamics of excitable cells: spike-adding phenomena in action,''
  \emph{SeMA Journal}, vol.~81, pp. 113--146, 2024.

\bibitem{Barrio2020Spike-adding}
\BIBentryALTinterwordspacing
R.~Barrio, S.~Ibáñez, L.~Pérez, and S.~Serrano, ``Spike-adding structure in
  fold/hom bursters,'' \emph{Communications in Nonlinear Science and Numerical
  Simulation}, vol.~83, p. 105100, 2020. [Online]. Available:
  \url{https://www.sciencedirect.com/science/article/pii/S1007570419304198}
\BIBentrySTDinterwordspacing

\bibitem{Barrio2021Classification}
R.~Barrio, S.~Ibanez, L.~Perez, and S.~Serrano, ``Classification of fold/hom
  and fold/hopf spike-adding phenomena,'' \emph{Chaos: An Interdisciplinary
  Journal of Nonlinear Science}, vol.~31, no.~4, p. 043120, 04 2021.

\bibitem{Channell2007Origin}
\BIBentryALTinterwordspacing
P.~Channell, G.~Cymbalyuk, and A.~Shilnikov, ``Origin of bursting through
  homoclinic spike adding in a neuron model,'' \emph{Phys. Rev. Lett.},
  vol.~98, p. 134101, Mar 2007. [Online]. Available:
  \url{https://link.aps.org/doi/10.1103/PhysRevLett.98.134101}
\BIBentrySTDinterwordspacing

\bibitem{Linaro2012Codimension}
\BIBentryALTinterwordspacing
D.~Linaro, A.~Champneys, M.~Desroches, and M.~Storace, ``Codimension-two
  homoclinic bifurcations underlying spike adding in the hindmarsh--rose
  burster,'' \emph{SIAM Journal on Applied Dynamical Systems}, vol.~11, no.~3,
  pp. 939--962, 2012. [Online]. Available:
  \url{https://doi.org/10.1137/110848931}
\BIBentrySTDinterwordspacing

\bibitem{Carter2020Spike-Adding}
\BIBentryALTinterwordspacing
P.~Carter, ``Spike-adding canard explosion in a class of square-wave
  bursters,'' \emph{Journal of Nonlinear Science}, vol.~30, p. 2613–2669,
  2020. [Online]. Available: \url{https://doi.org/10.1007/s00332-020-09631-y}
\BIBentrySTDinterwordspacing

\bibitem{Penalva2024Dynamics}
\BIBentryALTinterwordspacing
J.~Penalva, M.~Desroches, A.~E. Teruel, and C.~Vich, ``Dynamics of a
  piecewise-linear {M}orris–{L}ecar model: Bifurcations and spike adding,''
  \emph{Journal of Nonlinear Science}, vol.~34, 2024. [Online]. Available:
  \url{https://doi.org/10.1007/s00332-024-10029-3}
\BIBentrySTDinterwordspacing

\bibitem{DESROCHES2016Spike-adding}
\BIBentryALTinterwordspacing
M.~Desroches, M.~Krupa, and S.~Rodrigues, ``Spike-adding in parabolic bursters:
  The role of folded-saddle canards,'' \emph{Physica D: Nonlinear Phenomena},
  vol. 331, pp. 58--70, 2016. [Online]. Available:
  \url{https://www.sciencedirect.com/science/article/pii/S0167278916300471}
\BIBentrySTDinterwordspacing

\bibitem{ermentrout2010foundations}
G.~B. Ermentrout and D.~H. Terman, \emph{Mathematical Foundations of
  Neuroscience}, S.~Antman, J.~Marsden, L.~Sirovich, and S.~Wiggins, Eds.\hskip
  1em plus 0.5em minus 0.4em\relax New York, NY: Springer, 2010, vol.~35.

\bibitem{desroches2018SIADS}
M.~Desroches and V.~Kirk, ``Spike-adding in a canonical three-time-scale model:
  Superslow explosion and folded-saddle canards,'' \emph{SIAM J. Applied
  Dynamical Systems}, vol.~17, no.~3, pp. 1989--2017, 2018.

\bibitem{LetsonEtAl2017}
B.~Letson, J.~E. Rubin, and T.~Vo, ``Analysis of interacting local oscillation
  mechanisms in three-timescale systems,'' \emph{SIAM Journal on Applied
  Mathematics}, vol.~77, no.~3, pp. 1020--1046, 2017.

\bibitem{buzsakiWatson2012}
G.~Buzsaki and B.~Watson, ``Brain rhythms and neural syntax: implications for
  efficient coding of cognitive content and neuropsychiatric disease,''
  \emph{Dialogues in Clinical Neuroscience}, vol.~14, no.~4, pp. 345--367,
  2012.

\bibitem{TsodyksEtAl1996}
M.~V. Tsodyks, W.~E. Skaggs, T.~J. Sejnowski, and B.~L. McNaughton,
  ``Population dynamics and theta rhythm phase precession of hippocampal place
  cell firing: A spiking neuron model,'' \emph{Hippocampus}, vol.~6, pp.
  271--280, 1996.

\bibitem{AdamEtAl2022}
E.~M. Adam, E.~N. Brown, N.~Kopell, and M.~M. McCarthy, ``Deep brain
  stimulation in the subthalamic nucleus for {P}arkinson’s disease can
  restore dynamics of striatal networks,'' \emph{PNAS}, vol. 119, pp.
  e2\,120\,808\,119:1--10, 2022.

\bibitem{YokoiEtAl2019}
R.~Yokoi, M.~Okabe, N.~Matsuda, A.~Odawara, A.~Karashima, and I.~Suzuki,
  ``Impact of sleep–wake-associated neuromodulators and repetitive
  low-frequency stimulation on human i{PSC}-derived neurons,'' \emph{Front.
  Neurosci.}, vol.~13, pp. 554:1--15, 2019.

\bibitem{davison2019mixed}
\BIBentryALTinterwordspacing
E.~N. Davison, Z.~Aminzare, B.~Dey, and N.~Ehrich~Leonard, ``Mixed mode
  oscillations and phase locking in coupled {F}itzhugh-{N}agumo model
  neurons,'' \emph{Chaos: An Interdisciplinary Journal of Nonlinear Science},
  vol.~29, no.~3, p. 033105, 2019. [Online]. Available:
  \url{https://doi.org/10.1063/1.5050178}
\BIBentrySTDinterwordspacing

\bibitem{fenichel1979GSPT}
\BIBentryALTinterwordspacing
N.~Fenichel, ``Geometric singular perturbation theory for ordinary differential
  equations,'' \emph{Journal of Differential Equations}, vol.~31, no.~1, pp.
  53--98, 1979. [Online]. Available:
  \url{https://www.sciencedirect.com/science/article/pii/0022039679901529}
\BIBentrySTDinterwordspacing

\bibitem{guckenheimer2003forced}
J.~Guckenheimer, K.~Hoffman, and W.~Weckesser, ``The forced van der {P}ol
  equation {I}: The slow flow and its bifurcations,'' \emph{SIAM Journal on
  Applied Dynamical Systems}, vol.~2, no.~1, pp. 1--35, 2003.

\bibitem{kuehn2015multiScaleGSPT}
C.~Kuehn, \emph{Multiple Time Scale Dynamics}.\hskip 1em plus 0.5em minus
  0.4em\relax Switzerland: Springer Cham, 2015, vol. 191.

\bibitem{brons2006MMO}
M.~Br{\o}ns, M.~Krupa, and M.~Wechselberger, \emph{Mixed Mode Oscillations due
  to the Generalized Canard Phenomenon}, ser. Fields Insititute
  Communications.\hskip 1em plus 0.5em minus 0.4em\relax American Mathematical
  Society, 2006, no.~49, pp. 39--64.

\bibitem{CurtuRubin2011}
R.~Curtu and J.~Rubin, ``Interaction of canard and singular {H}opf mechanisms
  in a neural model,'' \emph{SIAM J. Appl. Dyn. Syst.}, vol.~10, no.~4, pp.
  1443--1479, 2011.

\bibitem{desroches2010numerical}
\BIBentryALTinterwordspacing
M.~Desroches, B.~Krauskopf, and H.~M. Osinga, ``Numerical continuation of
  canard orbits in slow–fast dynamical systems,'' \emph{Nonlinearity},
  vol.~23, no.~3, p. 739, feb 2010. [Online]. Available:
  \url{https://dx.doi.org/10.1088/0951-7715/23/3/017}
\BIBentrySTDinterwordspacing

\bibitem{szmolyan2001R3}
P.~Szmolyan and M.~Wechselberger, ``Canards in {$\mathbb{R}^3$},''
  \emph{Journal of Differential Equations}, vol. 177, no.~2, pp. 419--453,
  2001.

\bibitem{wechselberger2005bifurcation}
M.~Wechselberger, ``Existence and bifurcation of canards in {$\mathbb{R}^3$} in
  the case of a folded node,'' \emph{SIAM Journal on Applied Dynamical
  Systems}, vol.~4, no.~1, pp. 101--139, 2005.

\bibitem{wechselberger2013nonauto}
M.~Wechselberger, J.~Mitry, and J.~Rinzel, \emph{Canard Theory and
  Excitability}.\hskip 1em plus 0.5em minus 0.4em\relax Cham: Springer
  International Publishing, 2013, pp. 89--132.

\bibitem{buzsaki2004neuronal}
G.~Buzs{\'a}ki and A.~Draguhn, ``Neuronal oscillations in cortical networks,''
  \emph{Science}, vol. 304, no. 5679, pp. 1926--1929, 2004.

\bibitem{buzsaki2012rhythms}
G.~Buzs{\'a}ki, C.~A. Anastassiou, and C.~Koch, ``The origin of extracellular
  fields and currents—eeg, ecog, lfp and spikes,'' \emph{Nature Reviews
  Neuroscience}, vol.~13, no.~6, pp. 407--420, 2012.

\bibitem{herweg2020theta}
N.~A. Herweg, E.~A. Solomon, and M.~J. Kahana, ``Theta oscillations in human
  memory,'' \emph{Trends in Cognitive Sciences}, vol.~24, no.~3, pp. 208--227,
  2020.

\bibitem{cho2023gamma}
K.~H. Cho, H.~J. Lim, and J.~Jeong, ``Gamma rhythm as a guardian of brain
  health,'' \emph{eLife}, vol.~12, p. e100238, 2023.

\bibitem{canolty2006high}
R.~T. Canolty, E.~Edwards, S.~S. Dalal, M.~Soltani, S.~S. Nagarajan, H.~E.
  Kirsch, M.~S. Berger, N.~M. Barbaro, and R.~T. Knight, ``High gamma power is
  phase-locked to theta oscillations in human neocortex,'' \emph{Science}, vol.
  313, no. 5793, pp. 1626--1628, 2006.

\bibitem{basar2013brain}
E.~Ba{\c{s}}ar and B.~G{\"u}ntekin, ``Brain's alpha, beta, gamma, delta, and
  theta oscillations in neuropsychiatric diseases: proposal for biomarker
  strategies,'' \emph{Supplement to Clinical Neurophysiology}, vol.~62, pp.
  19--54, 2013.

\bibitem{IzhikevichFitzHugh2006}
E.~M. Izhikevich and R.~FitzHugh, ``{F}itzhugh-{N}agumo model,''
  \emph{Scholarpedia}, vol. 1(9), p. 1349, 2006.

\bibitem{Bold2003forced}
K.~Bold, C.~Edwards, J.~Guckenheimer, S.~Guharay, K.~Hoffman, J.~Hubbard,
  R.~Oliva, and W.~Weckesser, ``The forced van der pol equation ii: Canards in
  the reduced system,'' \emph{SIAM Journal on Applied Dynamical Systems},
  vol.~2, no.~4, pp. 570--608, 2003.

\bibitem{Burke2016Canards}
J.~Burke, M.~Desroches, A.~Granados, T.~J. Kaper, M.~Krupa, and T.~Vo, ``From
  canards of folded singularities to torus canards in a forced van der{P}ol
  equation,'' \emph{Journal of Nonlinear Science}, vol.~26, no.~2, pp.
  405--451, 2016.

\bibitem{kaklamanos2022bifurcations}
P.~Kaklamanos, N.~Popović, and K.~U. Kristiansen, ``Bifurcations of mixed-mode
  oscillations in three-timescale systems: An extended prototypical example,''
  \emph{Chaos: An Interdisciplinary Journal of Nonlinear Science}, vol.~32,
  no.~1, p. 013108, 2022.

\bibitem{neishtadt1987persistence}
A.~I. Neishtadt, ``Persistence of stability loss for dynamical bifurcations
  i,'' \emph{Differential Equations}, vol.~23, no.~12, pp. 1385--1391, 1987.

\bibitem{neishtadt1988persistence}
------, ``Persistence of stability loss for dynamical bifurcations ii,''
  \emph{Differential Equations}, vol.~24, no.~2, pp. 171--176, 1988.

\bibitem{hayes2016geometric}
M.~Hayes, T.~J. Kaper, P.~Szmolyan, and M.~Wechselberger, ``Geometric
  desingularization of degenerate singularities in the presence of fast
  rotation: A new proof of known results for slow passage through hopf
  bifurcations,'' \emph{Indagationes Mathematicae}, vol.~27, no.~5, pp.
  1184--1203, 2016.

\bibitem{espanol2015BZ}
M.~I. Espanol and H.~G. Rotstein, ``Complex mixed-mode oscillatory patterns in
  a periodically forced excitable belousov-zhabotinsky reaction model,''
  \emph{Chaos: An Interdisciplinary Journal of Nonlinear Science}, vol.~25,
  no.~6, p. 064612, 06 2015.

\bibitem{wieczorek2011compost}
S.~Wieczorek, P.~Ashwin, C.~M. Luke, and P.~M. Cox, ``Excitability in ramped
  systems: the compost-bomb instability,'' \emph{Proceedings of the Royal
  Society A: Mathematical, Physical and Engineering Sciences}, vol. 467, no.
  2129, pp. 1243--1269, 2011.

\bibitem{CannontAl2014}
J.~Cannon, M.~M. McCarthy, S.~Lee, J.~Lee, C.~B\"{o}rgers, M.~A. Whittington,
  and N.~Kopell, ``Neurosystems: brain rhythms and cognitive processing,''
  \emph{Eur J Neurosci.}, vol.~39, pp. 705--719, 2014.

\end{thebibliography}
\end{document}